\documentclass[12pt,a4paper]{amsart}
\usepackage{graphicx,amssymb}
\input xy
\xyoption{all}
\usepackage[all]{xy}
\usepackage{hyperref}

\setlength{\oddsidemargin}{-48pt}
 \setlength{\evensidemargin}{-48pt}
 \setlength{\textwidth}{16cm}

\hoffset=1.5cm

\vfuzz2pt 
\hfuzz2pt 
\newtheorem{thm}{Theorem}[section]
\newtheorem{cor}[thm]{Corollary}
\newtheorem{lem}[thm]{Lemma}
\newtheorem{prop}[thm]{Proposition}
\theoremstyle{definition}

\numberwithin{equation}{section}

\newcommand{\PP}{\mathbb P}

\newcommand{\lra}{\longrightarrow}

\newcommand{\ra}{\rightarrow}

 \DeclareMathOperator{\Ker}{Ker}

 \DeclareMathOperator{\Nm}{{Nm}}

\begin{document}

\title[ ]{Prym varieties of \'etale covers of hyperelliptic curves}%
\author{ Herbert Lange and  Angela Ortega}
\address{H. Lange \\ Department Mathematik der Universit\"at Erlangen \\ Germany}
\email{lange@math.fau.de}
              
\address{A. Ortega \\ Institut f\" ur Mathematik, Humboldt Universit\"at zu Berlin \\ Germany}
\email{ortega@math.hu-berlin.de}

\thanks{The second author was supported by Deutsche Forschungsgemeinschaft, SFB 647.}
\subjclass{14H40, 14H30}
\keywords{Prym variety, Prym map}%

\begin{abstract}
It is well known that the Prym variety of an \'etale cyclic covering of a hyperelliptic curve is isogenous to the 
product of two Jacobians. Moreover, if the degree of the covering is odd or congruent to 2 mod 4, then the canonical 
isogeny is an isomorphism. We compute the degree of this isogeny in the remaining cases and show 
that only in the case of coverings of degree 4 it is an isomorphism.   
\end{abstract}
\maketitle

\section{Introduction}

Let $H$ denote a hyperelliptic curve of genus $g \geq 2$ and $f: X \ra H$ an  \'etale cyclic covering 
of degree $n \geq 2$. Let $\sigma$ denote the automorphism of $X$ defining $f$. It is well known 
that the hyperelliptic involution of $H$ lifts to an involution $\tau$ on $X$. Then $\sigma$ and $\tau$ 
generate the dihedral group $D_n$ of order $2n$. The Prym variety $P(f)$ of $f$ is defined as the 
connected component containing 0 of the kernel of the norm map $\Nm f: JX \ra JH$ of $f$. 
For any element $\alpha \in D_n$ we denote by $X_\alpha$ the quotient of $X$ by the subgroup generated by $\alpha$. The Jacobians $JX_\tau$ and $JX_{\tau \sigma}$ are abelian subvarieties of
the Prym variety $P(f)$ so the the addition map
$$
a: JX_\tau \times JX_{\tau \sigma} \ra P(f)
$$
is well defined. Mumford showed in \cite{m} that for $n = 2$ the map $a$ is an isomorphism. J. Ries proved the 
same for any odd prime degree $n$ (\cite{ri}). The second author generalized this statement largely to show that
$a$ is an isomorphism for any odd number and, more important, for any even $n \equiv 2 \mod 4$ (\cite{o}). 

It is an obvious question  whether this is true for any degree $n$. In fact, using the action of the 
group $D_n$ on $P(f)$ and a little representation theory, it is not difficult to see that $a$ is an isogeny.
For more precise results on the decomposition of $P(f)$ up to isogeny see \cite{crr}. It is the aim of 
this note to compute the degree of the isogeny $a$. Our main result is\\

{\bf Theorem 4.1}
{\it Let $f:X \ra H$ be as above with $n = 2^rm,  \; r \geq 2$ and $m$ odd. Then $a$ is an isogeny of 
degree 
$$
\deg a = 2^{[(2^r -r -1)m - (r-1)](g-1)}.
$$}

\noindent
So $a$ is an isomorphism for odd $n$, for $n \equiv 2 \mod 4$, and for $n = 4$. 
The proof proceeds by induction on the exponent $r$, the beginning of the induction being Ortega's theorem in \cite{o}.

\section{Preliminaries} \label{prel}

 Let $H$ be a smooth hyperelliptic curve of genus $g$ 
with hyperellptic covering $\pi: H \ra \PP^1$ and $f:X \ra H$ be a cyclic \'etale covering 
of degree $n \geq 2$. So $X$ is of genus $g_X = n(g-1) + 1$ and the Prym variety $P := P(f)$ 
of $f$ is an abelian
variety of dimension 
\begin{equation} \label{eq1.1}
\dim P = (n-1)(g-1).
\end{equation}
The canonical polarization of $JX$ induces a polarization on $P$
of type 
$$
( \underbrace{1,\dots, 1}_{(n-2)(g-1)}, \underbrace{n,\dots,n}_{g-1}).
$$ 
The hyperelliptic involution of $H$ lifts to an involution $\tau$ on $X$ which together with the
 automorphism $\sigma$ defined by the covering $f$ generate the dihedral group
$$
D_n := \langle  \sigma, \tau \;|\; \sigma^n = \tau^2 = (\sigma\tau)^2 = 1 \rangle.
$$
The automorphism $\sigma$  induces an automorphism of the same order $n$ of $P$  compatible 
with the polarization, which we denote by the same letter. Each eigenvalue 
$\zeta_n^i, i=1, \dots, n-1$ (with $\zeta_n$  a fixed primitive $n$-th root of unity)
of the induced map on the tangent space $T_0P$ occurs with multiplicity $g-1$.

In the whole paper we write
$$
n = 2^r m
$$
with $r \geq 0$ and $m$ odd.

In any case the group $D_n$ admits $n$ involutions, namely $\tau \sigma^\nu$ for 
$\nu = 0, \dots n-1$. For odd $n$, these are all the
involutions. For even $n$, there is one more, namely $\sigma^{\frac{n}{2}}$. For odd $n$ all 
involutions are conjugate to $\tau$ and for even $n$ there are 3 conjugacy classes. They are  represented by
$$
\tau, \; \tau \sigma^m\quad \mbox{and} \quad \sigma^{n/2}.
$$
 For any subgroup $S \subset D_n$ and for any element $\alpha \in D_n$ we denote by
$$
X_S:= X/S \quad \mbox{and} \quad X_\alpha := X/ \langle \alpha \rangle 
$$
the corresponding quotients.

Consider the following diagram (for odd $n$ only the left hand side of the diagram, since in this case $m=n$, so both sides are the same).
$$
\xymatrix{
& X \ar[d]_f^{n:1} \ar[dr]^{2:1} \ar[dl]_{2:1} &\\
X_{\tau} \ar[dr]_{n:1}  & H \ar[d]_\pi^{2:1} &  X_{\tau\sigma^m}  \ar[dl]^{n:1}\\
& \PP^1 &
}
$$
Let $W$ denote the set of $2g+2$ branch points of the hyperelliptic 
covering $\pi$. Then denote for arbitrary $n$, 
$$
s_0 := \left| \{ x \in W \;|\; (\pi f)^{-1}(x) \; \mbox{contains a fixed point of}\; \tau  \} \right|
$$
and
$$
s_1 := \left| \{ x \in W \;|\; (\pi f)^{-1}(x) \; \mbox{contains a fixed point of}\; \tau \sigma^m \} \right|.
$$

According to \cite[Proposition 2.4]{o}
the Jacobians $JX_\tau$ and $JX_{\tau \sigma^m}$ are contained in the Prym variety $P$. 
With these notations the following theorem is proved in \cite{o}.

\begin{thm} \label{thm2.1} {\em (a)}
 For odd $n$ the map 
$$
\psi: (JX_\tau)^2 \ra P, \qquad (x,y) \mapsto x + \sigma(y)
$$
is an isomorphism.

{\em (b)} 
For $n = 2m \equiv 2 \mod 4$ the map
$$
\psi: JX_\tau \times JX_{\tau\sigma^m} \ra P, \qquad (x,y) \mapsto x + y
$$
is an isomorphism. Moreover, 
$$
g(X_\tau )= m(g-1) + 1 - \frac{s_0}{2} \quad \mbox{and} \quad 
g(X_{\tau \sigma^m})= m(g-1) + 1 - \frac{s_1}{2}.
$$
In particular $s_0$ and $s_1$ are even.
\end{thm}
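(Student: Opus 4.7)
The plan is to verify three things for each part: that $JX_\tau$ (and $JX_{\tau\sigma^m}$ in case (b)) are contained in $P$, that the source and target of $\psi$ have the same dimension, and that $\psi$ has trivial kernel. The first is given by the cited \cite[Proposition 2.4]{o}, which also supplies the containment of $\sigma(JX_\tau)$ in case (a); hence $\psi$ is well defined in both cases.

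I would establish the dimension count and the genus formulas simultaneously via Riemann--Hurwitz applied to the degree-two quotients $\pi_\tau\colon X\to X_\tau$ and (in case (b)) $\pi_{\tau\sigma^m}\colon X\to X_{\tau\sigma^m}$. Since $\tau$ lifts the hyperelliptic involution of $H$, its fixed points lie in the fibres of $\pi\circ f$ over the $2g+2$ Weierstrass points $W\subset\PP^1$. On such a fibre -- an $n$-element $\langle\sigma\rangle$-torsor identified with $\ZZ/n$ -- the involution $\tau$ acts as $k\mapsto -k+a$, so its fixed points are the solutions of $2k\equiv a\pmod{n}$. For odd $n$ this has a unique solution in every fibre, giving $\abs{\Fix(\tau)}=2g+2$ and $g(X_\tau)=(n-1)(g-1)/2$, whence $2\,g(X_\tau)=\dim P$. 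For $n=2m$ with $m$ odd, there are either $0$ or $2$ solutions according to the parity of $a$, and the analogous equation for $\tau\sigma^m$ is $2k\equiv a-m\pmod{n}$; since $m$ is odd, exactly one of $a$ and $a-m$ is even in each fibre, forcing $s_0+s_1=2g+2$. Riemann--Hurwitz then yields the stated genus formulas for $g(X_\tau)$ and $g(X_{\tau\sigma^m})$, whose sum is $(n-1)(g-1)=\dim P$.

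To show $\psi$ is an isomorphism I would decompose $T_0P=\bigoplus_{i=1}^{n-1}V_i$ into the $\sigma$-eigenspaces (each of dimension $g-1$), with $\tau$ interchanging $V_i$ and $V_{-i}$. The tangent space of $JX_\tau\subset P$ is the $\tau$-fixed subspace, spanned on each pair by vectors $v+\tau v$ with $v\in V_i$. In case (a) the vectors $v+\tau v$ and $\zeta_n^iv+\zeta_n^{-i}\tau v$, coming from $T_0JX_\tau$ and $d\sigma\,T_0JX_\tau$, are linearly independent for odd $n$ and together span $V_i\oplus V_{-i}$; hence $T_0JX_\tau+d\sigma\,T_0JX_\tau=T_0P$ and $\psi$ is a surjection between abelian varieties of equal dimension. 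An analogous decomposition handles case (b), using the behaviour of $\tau\sigma^m$ on the self-paired eigenspace $V_m$ and on the pairs $V_i\oplus V_{-i}$ for $i\neq m$.

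The hardest step is proving $\ker\psi=0$. I would pull back the polarization of $P$, which is of type $(1,\ldots,1,n,\ldots,n)$ with $(n-2)(g-1)$ ones and $g-1$ copies of $n$, along $\psi$ and compare with the product polarization on the source, using that $\pi_\tau$ and $\pi_{\tau\sigma^m}$ each have degree $2$; this should pin down $\deg\psi=1$. Equivalently, one may bound the intersections $JX_\tau\cap\sigma(JX_\tau)$ in (a) and $JX_\tau\cap JX_{\tau\sigma^m}$ in (b) by showing that any common element lies in the fixed locus of a strictly larger subgroup of $D_n$ and that this locus meets $P$ trivially under the hypotheses $n$ odd or $n\equiv 2\pmod{4}$. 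This polarization or intersection argument is the main obstacle; the rest reduces to genus arithmetic and eigenspace bookkeeping.
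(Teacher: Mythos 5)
The paper does not prove this theorem at all: it is imported verbatim from Ortega's paper \cite{o} (``the following theorem is proved in \cite{o}''), so there is no in-paper argument to compare yours against. Judged on its own, your write-up correctly handles the routine part --- the containments, the fixed-point count on the fibres over Weierstrass points via Lemma \ref{lem1.2}-type reasoning, the Riemann--Hurwitz genus formulas, and the resulting equality of dimensions --- but it does not actually prove the theorem. The decisive claim is that $\deg\psi=1$, and your last paragraph only says that a comparison of polarizations or a bound on $JX_\tau\cap\sigma(JX_\tau)$ (resp.\ $JX_\tau\cap JX_{\tau\sigma^m}$) ``should pin down $\deg\psi=1$,'' explicitly flagging it as the main obstacle. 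That step is precisely the content of the theorem: as the introduction of the paper notes, the fact that $\psi$ is an \emph{isogeny} already follows from elementary representation theory of $D_n$, so a proof that stops at surjectivity plus equal dimensions establishes only the easy statement. As it stands the kernel computation is a placeholder, not an argument, and the proof is incomplete.

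There is also a concrete problem in your surjectivity sketch for part (b). You assert that ``an analogous decomposition handles case (b),'' but if one uses the involution $\tau\sigma^m$ literally, the analogy breaks: on an eigenspace pair $V_i\oplus V_{-i}$ with $i$ even, $d\sigma^m=d\sigma^{n/2}$ acts as $\zeta_n^{\pm im}=(-1)^i=1$, so $d\tau$ and $d(\tau\sigma^m)$ have \emph{identical} fixed subspaces there and their sum does not fill $V_i\oplus V_{-i}$. Geometrically, $\tau$ and $\tau\sigma^m$ generate the Klein group $K_\tau=\{1,\sigma^{n/2},\tau,\tau\sigma^{n/2}\}$ (here $n/2=m$), both $X_\tau$ and $X_{\tau\sigma^m}$ dominate $X_{K_\tau}$, and $g(X_{K_\tau})=\tfrac12(m-1)(g-1)>0$ once $m\geq 3$, so $JX_\tau$ and $JX_{\tau\sigma^m}$ share a positive-dimensional abelian subvariety and the addition map cannot be injective. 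Your argument (and the statement) does go through with the representative $\tau\sigma$ --- which is what the paper's own introduction uses, and for which $\zeta_n^{-i}\neq 1$ for all $1\leq i\leq n-1$ --- so you should either switch to $\tau\sigma$ or explain how the overlap along $JX_{K_\tau}$ is avoided. This is exactly the point where case (b) genuinely differs from case (a), and your sketch glosses over it.
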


It is the aim of this paper to study the map $\psi$ in the remaining cases $n = 2^rm$ with $r \geq 2$. 
So in the sequel we assume $r \geq 2$. We first need some preliminaries.\\

There are 2 non-conjugate Kleinian subgroups of $D_n$, namely
$$
K_\tau = \{1, \sigma^{n/2}, \tau,  \tau \sigma^{n/2}\} \quad \mbox{and} \quad 
K_{\tau \sigma^m} = \{1, \sigma^{n/2}, \tau \sigma^m, \tau  \sigma^{m+n/2} \}.
$$
Moreover, consider the dihedral subgroups of order 8,
$$
T_\tau = \langle \tau, \sigma^{n/4} \rangle \quad \mbox{and} \quad 
T_{\tau \sigma^{m}}= \langle \tau \sigma^m, \sigma^{n/4} \rangle.
$$
Note that for $r \geq 3$ the groups $T_\tau$ and $T_{\tau \sigma^{m}}$ are non-conjugate, whereas
\begin{equation} \label{e1.2}
T_\tau = T_{\tau \sigma^{m}} \qquad \mbox{for} \qquad r=2,
\end{equation} 
since then $\frac{n}{4} = m$ and $\langle \tau, \sigma^m \rangle =   \langle \tau \sigma^m, \sigma^m  \rangle$.
In any case we have the following commutative diagram

\begin{equation} \label{diag1.2}
\xymatrix@R=1cm@C=1cm{
& X \ar[d]^{2:1}_{f_1} \ar[dr]^{a_{\tau \sigma^m}} \ar[dl]_{a_{\tau \sigma^{n/2}}} &\\
X_{\tau \sigma^{n/2}} \ar[d]_{b_{\tau\sigma^{n/2}}} & X_{\sigma^{n/2}} \ar[d]^{2:1}_{f_2} \ar[dl]_{c_{\tau \sigma^{n/2}}} \ar[dr]^{c_{\tau \sigma^m}} & X_{\tau \sigma^m} \ar[d]^{b_{\tau \sigma^m}} \\
X_{K_\tau} \ar[d]_{d_{\tau \sigma^{n/2}}} & X_{\sigma^{n/4}} \ar[dl]_{e_{\tau \sigma^{n/2}}} \ar[dr]^{e_{\tau \sigma^m}} \ar[dd]^{\frac{n}{4}:1}_{f_3}& X_{K_{\tau\sigma^m}} \ar[d]^{d_{\tau \sigma^m}} \\
X_{T_\tau} \ar[ddr]_{\frac{n}{4}:1} && X_{T_{\tau \sigma^m}} \ar[ddl]^{\frac{n}{4}:1}\\
&  X_{\sigma} = H \ar[d]^\pi &\\
& \PP^1 &
}
\end{equation}

In the sequel we use the following notation: if an involution of the group $D_n$ induces an involution on a curve of the diagram, we denote the induced involution by the same letter.
In order to compute the genera of the curves in the diagram, we need the following lemma.

\begin{lem} \label{lem1.2}
Suppose that the dihedral group $D_n = \langle \sigma, \tau \rangle$ of 
order $2n$ with $n \geq 3$ acts on a finite set $S$ of $n$ elements such that the subgroup 
$\langle \sigma \rangle$ acts transitively on $S$. 

{\em (a)} If $n$ is odd, $\tau$ admits exactly one fixed point,

{\em (b)} for even $n$,  either $\tau$ acts fixed-point free or admits exactly $2$ fixed points.

{\em (c)} for even $n$, exactly one of the involutions
$\tau$ and $\tau \sigma$  admits a fixed point.
\end{lem}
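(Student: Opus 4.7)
The plan is to exploit the fact that if $\langle\sigma\rangle$ (cyclic of order $n$) acts transitively on a set of $n$ elements, then this action is simply transitive. Fix a base point $s_0 \in S$; then the orbit map $\sigma^j \mapsto \sigma^j \cdot s_0$ identifies $S$ with $\ZZ/n\ZZ$ in such a way that $\sigma$ acts by $j \mapsto j+1$. The entire problem is thereby reduced to a computation in $\ZZ/n\ZZ$.

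Next I would determine the action of $\tau$ under this identification. Write $\tau \cdot s_0 = \sigma^k \cdot s_0$ for some $k \in \ZZ/n\ZZ$. Using the dihedral relation $\tau \sigma = \sigma^{-1}\tau$, one computes
$$
\tau \cdot (\sigma^j \cdot s_0) = (\tau \sigma^j) \cdot s_0 = (\sigma^{-j}\tau) \cdot s_0 = \sigma^{k-j} \cdot s_0,
$$
so $\tau$ acts on $\ZZ/n\ZZ$ by $j \mapsto k-j$. Hence fixed points of $\tau$ correspond to solutions of the congruence $2j \equiv k \pmod{n}$. An analogous calculation shows $\tau\sigma$ acts by $j \mapsto k-1-j$, so its fixed points are the solutions of $2j \equiv k-1 \pmod{n}$.

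Parts (a)--(c) now follow from elementary counting of solutions to $2j \equiv c \pmod n$. For $n$ odd, $2$ is a unit mod $n$, giving exactly one solution for any $c$; this yields (a). For $n$ even, such a congruence has exactly $2$ solutions if $c$ is even and none otherwise, giving (b). Finally, since exactly one of $k$ and $k-1$ is even, exactly one of $\tau, \tau\sigma$ has fixed points, proving (c).

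I do not anticipate any genuine obstacle: the identification of $S$ with $\ZZ/n\ZZ$ via the regular $\langle\sigma\rangle$-action is the only conceptual step, and after that everything reduces to the parity of an integer. The one small point to be careful about is that the identification of $S$ with $\ZZ/n\ZZ$ depends on the choice of $s_0$, but this merely shifts $k$ by an even amount (namely $2j_0$ when $s_0$ is replaced by $\sigma^{j_0}\cdot s_0$), so the parity of $k$ is well defined and the trichotomy in (b)--(c) is intrinsic.
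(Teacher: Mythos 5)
Your proof is correct and complete. It rests on the same underlying observation as the paper's proof --- that the simply transitive $\langle\sigma\rangle$-action identifies $S$ with $\ZZ/n\ZZ$ so that $\tau$ acts as a reflection $j \mapsto k-j$ --- but your execution is more uniform and arguably cleaner. The paper first anchors the coordinates at a fixed point of $\tau$ (which it must argue exists for odd $n$, and assume for even $n$), derives the reflection formula $\tau(x_i)=x_{n+2-i}$ by induction, and then proves part (c) in two directions, with the converse direction (when $\tau$ is fixed-point free) handled by a conjugacy-class argument among the involutions of $D_n$. You instead parametrize without assuming any fixed point exists and reduce all three parts to counting solutions of $2j \equiv c \pmod n$, which handles (a), (b), and both directions of (c) in one stroke via the parity of $k$; your remark that this parity is independent of the base point is exactly the right thing to check and corresponds to the conjugacy-invariance the paper invokes implicitly. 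No gaps.
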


\begin{proof}
Let $S = \{x_1, \dots x_n\}$. We may enumerate the $x_i$ in such a way that
$\sigma(x_i) = x_{i+1}$ for $i=1, \dots,n$ where $x_{n+1} = x_1$.
If $n$ is odd, then clearly $\tau$ admits a fixed point. So in any case we may assume that $x_1$ is a 
fixed point of $\tau$. Then we have inductively for $i = 1, \dots \lfloor \frac{n+3}{2} \rfloor$,
\begin{equation} \label{eq4.1}
\tau(x_i) = x_{n+2-i}.
\end{equation}
In fact, the induction step is $\tau(x_i) = \tau \sigma(x_{i-1}) = \sigma^{-1} \tau(x_{i-1}) = 
\sigma^{-1}(x_{n-i+3}) = x_{n-i+2}$. Hence for odd $n$ the involution
$\tau$ admits no further fixed point and for even $n$ $\tau$ admits exactly one additional fixed point, 
namely $x_{\frac{n+2}{2}}$. This gives (a) and (b).

(c): Suppose $n$ is even and $\tau$ admits a fixed point, say $x_1$. Hence we have \eqref{eq4.1}
for all $i$. This implies 
$$
\tau \sigma(x_i) = \tau(x_{i+1}) = x_{n+1-i}.
$$
and $\tau \sigma$ acts fixed point free. Conversely, suppose $\tau$ acts fixed point free. Suppose that
$\tau(x_1) = x_i$ for some $i \geq 2$. Then $\sigma^{1-i} \tau (x_1) = x_1$. So 
$\sigma^{1-i} \tau$ admits a fixed point and thus cannot be equivalent to $\tau$. Hence $\tau \sigma$ is equivalent to $\sigma^{1-i} \tau$
and admits a fixed point.
\end{proof}

\begin{lem} \label{lem1.3} Suppose $n = 2^rm$ with $m$ odd and $r \geq 2$. Then
 
{\rm(i)}
$$
s_0 + s_1 = 2g+2 \quad \mbox{with} \quad s_0, s_1 \geq 2 \; \mbox{even};
$$

{\rm (ii)} for $r=2$, $X_{\sigma^{n/4}} \ra X_{T_\tau}$  and $X_{\sigma^{n/4}} \ra  X_{T_{\tau \sigma^m}}$
are ramified exactly at $2g+2$ points.

{\rm(iii)} $X \ra X_\tau$ and $X_{\sigma^{n/2}} \ra X_{K_\tau}$ as well as 
$X_{\sigma^{n/4}} \ra X_{T_\tau} $, if $r \geq 3$,  
are ramified exactly at $2s_0$
 points.  $X \ra X_{\tau \sigma^m}$ and $X_{\sigma^{n/2}} \ra X_{K_{\tau \sigma^m}}$ 
as well as $X_{\sigma^{n/4}} \ra  X_{T_{\tau \sigma^m}}$, if $r \geq 3$,
are ramified exactly at $2s_1$  points.
\end{lem}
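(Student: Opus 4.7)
For part (i), I would use the $D_n$-action on each $n$-point fiber of $\pi f\colon X \to \PP^1$ above a point $x \in W$. Since $\langle\sigma\rangle$ acts transitively, Lemma~\ref{lem1.2}(c) applies: exactly one of $\tau$ and $\tau\sigma$ has fixed points on the fiber. The two conjugacy classes of non-central involutions in $D_n$ are $\{\tau\sigma^k : k \text{ even}\}$ and $\{\tau\sigma^k : k \text{ odd}\}$, and since $m$ is odd, $\tau\sigma^m$ lies in the odd class; hence each $x \in W$ contributes to precisely one of $s_0, s_1$, giving $s_0 + s_1 = 2g+2$. Evenness of $s_0$ (and $s_1$) will then follow from Riemann--Hurwitz applied to $X \to X_\tau$: by Lemma~\ref{lem1.2}(b) this double cover is ramified at $2s_0$ points, so $g(X_\tau) = \tfrac{1}{2}(n(g-1) + 2 - s_0)$, and integrality combined with $n$ even forces $s_0$ even.

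For the bound $s_0, s_1 \geq 2$, the plan is to pass to the intermediate \'etale double cover $\tilde H := X_{\sigma^2} \to H$ and examine the two lifts $\bar\tau, \overline{\tau\sigma}$ of the hyperelliptic involution. A point of $\tilde H$ above $x \in W$ is $\bar\tau$-fixed iff some preimage in $X$ is fixed by an even-parity involution $\tau\sigma^{2j}$, iff $x$ contributes to $s_0$; since on each $s_0$-fiber all $n$ points are fixed by some even-parity involution, both points of $\tilde H$ over such $x$ are $\bar\tau$-fixed. This will yield $|\Fix(\bar\tau)| = 2s_0$ and symmetrically $|\Fix(\overline{\tau\sigma})| = 2s_1$. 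With $g(\tilde H) = 2g - 1$, the Hurwitz bound $|\Fix| \leq 2g(\tilde H) + 2 = 4g$ then gives $s_0, s_1 \leq 2g$, and the sum identity forces $s_0, s_1 \geq 2$. I expect this to be the main nontrivial input of the proof.

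For parts (ii) and (iii), the central tool will be the following fixed-point tally on $X$: $\sigma^{n/2}$ is fixed-point free; each even-parity involution $\tau\sigma^{2j}$ has $2s_0$ fixed points; each odd-parity involution $\tau\sigma^{2j+1}$ has $2s_1$ fixed points; and these fixed-point sets are pairwise disjoint (two common fixed points would give a nontrivial power of $\sigma$ with a fixed point). For each map $Y \to X_S$ in the statement sitting atop an \'etale cover $X \to Y$ of degree $d$, the ramification of $Y \to X_S$ will equal $\tfrac{1}{d}\sum_{1 \neq \alpha \in S} |\Fix(\alpha)|$, since all nontrivial stabilizers have order $2$.

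Applying this: $X \to X_\tau$ has $2s_0$ ramification points directly. For $X_{\sigma^{n/2}} \to X_{K_\tau}$, the non-identity elements of $K_\tau$ are $\sigma^{n/2}$ together with the even-parity involutions $\tau, \tau\sigma^{n/2}$ (even since $n/2$ is even for $r \geq 2$), giving $4s_0/2 = 2s_0$. For $X_{\sigma^{n/4}} \to X_{T_\tau}$ with $r \geq 3$, $n/4$ is even, so all four involutions $\tau\sigma^{kn/4}$ ($k = 0, 1, 2, 3$) are even-parity, giving $8s_0/4 = 2s_0$. For $r = 2$ in (ii), $n/4 = m$ is odd, so these four involutions split evenly between the two parities (two of each), yielding $(4s_0 + 4s_1)/4 = s_0 + s_1 = 2g+2$. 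The analogues with $\tau\sigma^m$, $K_{\tau\sigma^m}$, $T_{\tau\sigma^m}$ in place of $\tau$, $K_\tau$, $T_\tau$ will follow by identical arguments with $s_0 \leftrightarrow s_1$.
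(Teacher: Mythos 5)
Your proposal is correct, and for most of the lemma it follows essentially the paper's route: everything comes down to Lemma~\ref{lem1.2} applied to the dihedral action on the $n$ points of $(\pi f)^{-1}(x)$ for $x\in W$, each such point being fixed by exactly one involution $\tau\sigma^{\nu}$ because $f$ is \'etale and $\pi$ is ramified there. The only organizational difference in (ii) and (iii) is that you tally fixed points of all involutions upstairs on $X$ and divide by the degree of the \'etale intermediate quotient ($2$ or $4$), whereas the paper applies Lemma~\ref{lem1.2} directly to the induced action of $D_{n/2}$ (resp.\ $D_{n/4}$) on the fibres of $X_{\sigma^{n/2}}\ra H$ (resp.\ $X_{\sigma^{n/4}}\ra H$), reading off $2$ fixed points per relevant fibre when the fibre cardinality is even and $1$ per fibre when it is odd (the case $r=2$, $n/4=m$). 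Both bookkeeping schemes give the same counts $2s_0$, $2s_1$, $2g+2$.

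The genuine divergence is the inequality $s_0,s_1\geq 2$. The paper argues that $s_0=0$ would make $\tau$ fixed-point free and then asserts that, since $\sigma$ is fixed-point free, $\tau\sigma^m$ is fixed-point free as well, contradicting $s_0+s_1=2g+2$; as literally written this implication is not automatic (a product of fixed-point-free automorphisms need not be fixed-point free), and the standard way to justify it is that the \'etale double cover $X_{\sigma^2}\ra H$ corresponds to a \emph{nonzero} $2$-torsion point of $JH$, hence to a partition of $W$ into two nonempty even subsets over which the two lifts of the hyperelliptic involution respectively acquire their fixed points. Your route instead identifies $|\Fix(\bar\tau)|=2s_0$ and $|\Fix(\overline{\tau\sigma})|=2s_1$ on $X_{\sigma^2}$, which has genus $2g-1$, and invokes the Hurwitz bound $|\Fix|\leq 2g(X_{\sigma^2})+2=4g$ for an involution; combined with $s_0+s_1=2g+2$ this forces $s_0,s_1\geq 2$. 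This is self-contained, avoids any appeal to the theory of $2$-torsion points, and is arguably tighter than what is printed. One cosmetic slip in your disjointness remark: it should say that a \emph{single} common fixed point of two distinct involutions $\tau\sigma^a$, $\tau\sigma^b$ would make the nontrivial rotation $\sigma^{b-a}$ fix a point, contradicting that $f$ is \'etale.
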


\begin{proof}
The fixed points of $\tau$ and $\tau \sigma^m$ lie over the $2g+2$ Weierstrass points of $H$. Moreover, according to Lemma \ref{lem1.2}. over each Weierstrass point of $H$
exactly one of $\tau$ and $\tau \sigma^m$ admits a fixed point. This gives the first assertion of (i).
The evenness of $s_0$ and $s_1$ follows from the Hurwitz formula. Now $s_0 =0$ means that 
$\tau$ acts fixed-point free. Since also $\sigma$ acts fixed-point free, so does $\tau \sigma^m$ which 
means $s_1 = 0$. But this contradicts the equation $s_0 + s_1 = 2g+2$. Hence $s_0, s_1 \geq 2$.

 If $x$ is a Weierstrass point of $H$ and $\tau$ admits a fixed point over $x$,
then $D_n$ acts on the fibre $f^{-1}(x)$. Similarly, the group $D_{n/2} = \langle \sigma^{n/2}, \tau \rangle$ acts
 on the fibre $(f_3 \circ f_2)^{-1}(x)$ and the group $D_{n/4} = \langle \sigma^{n/4}, \tau \rangle$ acts of the fibre $f_3^{-1}(x)$.
Hence Lemma \ref{lem1.2} implies (ii), since in these cases the order of the fibre is even,  and (iii),
since in this case the order of the fibre is odd.
\end{proof}

By checking the ramification of the maps in diagram \eqref{diag1.2} we immediately get from
Lemma \ref{lem1.3} the following corollaries.

\begin{cor} \label{cor1.3}
All vertical left and right hand maps are ramified. 
\end{cor}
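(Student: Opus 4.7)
The strategy is to apply Riemann--Hurwitz to certain degree-$4$ composite covers in diagram \eqref{diag1.2} along two distinct factorizations, and use the ramification numbers provided by Lemma \ref{lem1.3} to solve for the ramification of each vertical map. The key auxiliary observation is that the central column $X \to X_{\sigma^{n/2}} \to X_{\sigma^{n/4}} \to H$ consists of \'etale covers: since $\sigma$ acts freely on $X$, no nontrivial power of $\sigma$ has a fixed point on any intermediate curve of the form $X_{\sigma^k}$. I recall the formula $R_{g\circ f} = (\deg f)\, R_g + R_f$ for a tower of covers, which is the key numerical tool.

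For $b_{\tau\sigma^{n/2}}: X_{\tau\sigma^{n/2}} \to X_{K_\tau}$, the plan is to equate the Hurwitz contributions along the two factorizations of the degree-$4$ composite $X \to X_{K_\tau}$. Via $X_{\sigma^{n/2}}$ one obtains total ramification $2\cdot 2 s_0 + 0 = 4 s_0$, since $f_1$ is \'etale and $c_{\tau\sigma^{n/2}}$ is ramified at $2s_0$ points by Lemma \ref{lem1.3}(iii). Via $X_{\tau\sigma^{n/2}}$ one obtains $2 R_b + 2 s_0$, where $R_b$ is the unknown ramification of $b_{\tau\sigma^{n/2}}$ and the summand $2 s_0$ records the fixed points of $\tau\sigma^{n/2}$ on $X$ (note that $\tau\sigma^{n/2}$ is $D_n$-conjugate to $\tau$ for $r \geq 2$, so Lemma \ref{lem1.3}(iii) applies to $a_{\tau\sigma^{n/2}}$). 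Equating yields $R_b = s_0 \geq 2$, so $b_{\tau\sigma^{n/2}}$ is ramified. A mirror argument with $\tau \leftrightarrow \tau\sigma^m$ and $s_0 \leftrightarrow s_1$ handles $b_{\tau\sigma^m}$.

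For $d_{\tau\sigma^{n/2}}: X_{K_\tau} \to X_{T_\tau}$, I would compare the two factorizations of the degree-$4$ composite $X_{\sigma^{n/2}} \to X_{T_\tau}$. Through $X_{\sigma^{n/4}}$ the map $f_2$ is \'etale, and Lemma \ref{lem1.3}(ii),(iii) says the second step $e_{\tau\sigma^{n/2}}$ has ramification $2 s_0$ if $r \geq 3$ and $2 g + 2$ if $r = 2$. Through $X_{K_\tau}$, the first step $c_{\tau\sigma^{n/2}}$ contributes $2 s_0$ and the second step $d_{\tau\sigma^{n/2}}$ contributes an unknown $R_d$. Solving the resulting linear equation yields $R_d = s_0 \geq 2$ for $r \geq 3$ and $R_d = 2 g + 2 - s_0 = s_1 \geq 2$ for $r = 2$; the symmetric computation handles $d_{\tau\sigma^m}$.

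The main subtlety I anticipate lies in the case $r = 2$: the collapse $T_\tau = T_{\tau\sigma^m}$ from \eqref{e1.2} forces the ramification of $e_{\tau\sigma^{n/2}}$ to jump from $2 s_0$ to $2g+2 = 2(s_0+s_1)$, and so the algebraic answer for $R_d$ swaps to the complementary index $s_1$ (respectively $s_0$ on the right side). Both quantities are bounded below by $2$ thanks to Lemma \ref{lem1.3}(i), so the conclusion of the corollary is robust across both cases $r = 2$ and $r \geq 3$.
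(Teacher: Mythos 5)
Your proposal is correct and follows the route the paper intends: the paper's one-line justification ("checking the ramification... from Lemma \ref{lem1.3}") is exactly the Riemann--Hurwitz bookkeeping you carry out, and your computed ramification numbers ($s_0$ for $b_{\tau\sigma^{n/2}}$, $s_1$ for $b_{\tau\sigma^m}$, and the swap to $s_1$ resp.\ $s_0$ for the $d$-maps when $r=2$) agree with the genus formulas of Corollary \ref{cor1.4}, with positivity supplied by Lemma \ref{lem1.3}(i). Your observation that $\tau\sigma^{n/2}$ is conjugate to $\tau$ for $r\geq 2$ (so that $a_{\tau\sigma^{n/2}}$ has $2s_0$ ramification points) is the one detail the paper leaves implicit, and you handle it correctly.
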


\begin{cor} \label{cor1.4} If $n =2^rm$ with $m$ odd and $r \geq 2$, then
$$
g(X) = n(g-1)+1 \qquad g(X_{\sigma^{n/2}}) = \frac{n}{2}(g-1) +1 \qquad 
g(X_{\sigma^{n/4}}) = \frac{n}{4}(g -1) +1;
$$
$$
g(X_{\tau \sigma^{n/2}}) = \frac{n}{2} (g-1) + 1 - \frac{s_0}{2} \qquad  \qquad  
g(X_{\tau \sigma^m}) = \frac{n}{2}(g-1) +1  - \frac{s_1}{2};
$$
$$
g(X_{K_\tau}) = \frac{n}{4}(g-1) + 1 - \frac{s_0}{2} \qquad \qquad   
g(X_{K_{\tau \sigma^m}}) = \frac{n}{4}(g-1) + 1 - \frac{s_1}{2};
$$
and for $r \geq 3$,
$$
g(X_{T_\tau}) = \frac{n}{8}(g-1) +1 - \frac{s_0}{2}\qquad \qquad 
g(X_{\tau \sigma^m}) = \frac{n}{8}(g-1) +1 - \frac{s_1}{2}.
$$
For $r= 2$, 
$$
g(X_{T_\tau}) = g(X_{\tau \sigma^m}) = \frac{1}{2}(m-1)(g-1).
$$
\end{cor}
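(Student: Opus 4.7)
The plan is to apply the Riemann--Hurwitz formula to each of the two-sheeted or étale covers appearing in diagram~\eqref{diag1.2}, feeding in the ramification data just obtained in Lemma~\ref{lem1.3}. The computation splits naturally into three layers: first the quotients by subgroups of $\langle \sigma \rangle$ (which are étale), then the quotients by an involution of the form $\tau \sigma^k$ (ramified over $s_0$ or $s_1$ Weierstrass points of $H$), and finally the quotients by Klein and dihedral subgroups, treated via the diagram by factoring them through $X_{\sigma^{n/2}}$ and $X_{\sigma^{n/4}}$.

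First I would handle the étale tower. Since $f\colon X \to H$ is étale of degree $n$, Riemann--Hurwitz immediately gives $g(X) = n(g-1)+1$. The covers $X \to X_{\sigma^{n/2}}$ and $X_{\sigma^{n/2}} \to X_{\sigma^{n/4}}$ are étale of degree $2$ as well (they are quotients by subgroups of the freely-acting cyclic group $\langle \sigma \rangle$), so the formulas for $g(X_{\sigma^{n/2}})$ and $g(X_{\sigma^{n/4}})$ drop out. Next, for the involution quotients $X\to X_{\tau \sigma^{n/2}}$ and $X\to X_{\tau \sigma^m}$: since $n/2$ is even (as $r\ge 2$), $\tau \sigma^{n/2}$ is conjugate to $\tau$ in $D_n$, so these are the covers of Lemma~\ref{lem1.3}(iii), ramified at $2s_0$ and $2s_1$ points respectively. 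Plugging these numbers into Hurwitz yields the two formulas on the second line of the statement.

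For $g(X_{K_\tau})$ and $g(X_{K_{\tau \sigma^m}})$, the diagram gives the two-sheeted maps $X_{\sigma^{n/2}} \to X_{K_\tau}$ and $X_{\sigma^{n/2}}\to X_{K_{\tau \sigma^m}}$, again ramified at $2s_0$ and $2s_1$ points by Lemma~\ref{lem1.3}(iii); Hurwitz, combined with the value of $g(X_{\sigma^{n/2}})$ from step one, gives the stated expressions. When $r\ge 3$, the same strategy applies one level lower: the covers $X_{\sigma^{n/4}}\to X_{T_\tau}$ and $X_{\sigma^{n/4}}\to X_{T_{\tau\sigma^m}}$ are again two-sheeted and ramified at $2s_0$ and $2s_1$ points by Lemma~\ref{lem1.3}(iii), producing the formulas for $g(X_{T_\tau})$ and $g(X_{T_{\tau\sigma^m}})$.

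The only non-routine case is $r=2$, where \eqref{e1.2} forces $T_\tau = T_{\tau\sigma^m}$ so the two $T$-quotients coincide. Now Lemma~\ref{lem1.3}(ii) applies instead of (iii): the cover $X_{\sigma^{n/4}}\to X_{T_\tau}$ is ramified at all $2g+2$ points lying over the Weierstrass points of $H$ (since $s_0 + s_1 = 2g+2$). Using $n=4m$ and $g(X_{\sigma^{n/4}}) = m(g-1)+1$, Riemann--Hurwitz gives $4g(X_{T_\tau}) - 4 = 2m(g-1) - (2g+2)$, which simplifies to $g(X_{T_\tau}) = \tfrac{1}{2}(m-1)(g-1)$. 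I do not expect any real obstacle beyond bookkeeping; the mild subtlety is simply recognizing that $\tau \sigma^{n/2}$ is $\sigma$-conjugate to $\tau$ (so shares its ramification count), and that the $r=2$ case must be separated because the two dihedral subgroups collapse and the ramification source shifts from Lemma~\ref{lem1.3}(iii) to (ii).
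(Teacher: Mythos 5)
Your proposal is correct and follows essentially the same route as the paper, which likewise derives every formula from the Riemann--Hurwitz formula using the \'etaleness of $f$ for the first line and the ramification counts of Lemma \ref{lem1.3}(ii) and (iii) for the rest. The only detail you add beyond the paper's one-line proof is the (correct and worth noting) observation that $\tau\sigma^{n/2}$ is conjugate to $\tau$ for $r\geq 2$, so that $X\to X_{\tau\sigma^{n/2}}$ inherits the ramification count $2s_0$ even though Lemma \ref{lem1.3}(iii) only names $X\to X_\tau$ explicitly.
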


\begin{proof}
All assertions follow from the Hurwitz formula. For the first line of assertions we use the fact that $f$ 
is \'etale. For the other formulas we use Lemma \ref{lem1.3}(ii) and (iii). 
\end{proof}

The following lemma is well known. 
In fact, it is an easy consequence of \cite[Proposition 11.4.3]{bl} and \cite[Corollary 12.1.4]{bl}.

\begin{lem} \label{lem1.5}
Let $g: Y \ra Z$ be a covering of smooth projective curves of degree $d \geq 2$.
The addition map 
$$
g^*JZ \times P(Y/Z) \ra JY
$$
is an isogeny of degree 
$$
|g^*JZ \cap P(Y/Z)| = \frac{|JZ[d]|}{|\ker g^*|^2}.
$$

\end{lem}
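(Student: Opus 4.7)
My plan is to recognise $g^*JZ$ and $P(Y/Z)$ as complementary abelian subvarieties of $JY$ and then pin down their intersection by two independent degree computations. From \cite[Corollary 12.1.4]{bl} I would invoke the identity $\Nm_g\circ g^* = d\cdot \mathrm{id}_{JZ}$, which immediately forces $\ker g^*\subseteq JZ[d]$ and $g^*JZ\cap \ker\Nm_g = g^*(JZ[d])$, a group of order $|JZ[d]|/|\ker g^*|$. Combined with the dimension identity $\dim g^*JZ+\dim P(Y/Z) = g(Z)+(g(Y)-g(Z)) = g(Y)$, this shows that $g^*JZ$ and $P(Y/Z)$ are complementary in $JY$; then \cite[Proposition 11.4.3]{bl} gives that the addition map is an isogeny with kernel $\{(x,-x)\,:\, x\in g^*JZ\cap P(Y/Z)\}$, of degree $|g^*JZ\cap P(Y/Z)|$.

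To identify this degree I would introduce the auxiliary isogeny
$$
\nu\colon JZ\times P(Y/Z)\lra JY,\qquad (z,y)\mapsto g^*(z)+y,
$$
and compute $\deg \nu$ in two ways. First, $\nu$ factors as $g^*\times \mathrm{id}$ followed by the addition map, giving $\deg\nu = |\ker g^*|\cdot |g^*JZ\cap P(Y/Z)|$. Second, the exact sequence $0\to P(Y/Z)\to JZ\times P(Y/Z)\xrightarrow{\pi_1}JZ\to 0$ maps into $0\to P(Y/Z)\to JY\to JY/P(Y/Z)\to 0$ via $(\mathrm{id},\nu,\bar g^*)$, where $\bar g^*\colon JZ\to JY/P(Y/Z)$ is the isogeny induced by $g^*$; a Snake Lemma argument then gives $\deg\nu = \deg\bar g^*$.

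Finally, $\Nm_g$ vanishes on $P(Y/Z)$ and descends to an isogeny $\overline{\Nm_g}\colon JY/P(Y/Z)\to JZ$ satisfying $\overline{\Nm_g}\circ\bar g^* = d\cdot \mathrm{id}_{JZ}$, so $\deg\bar g^*\cdot\deg\overline{\Nm_g} = |JZ[d]|$. Since $\deg\overline{\Nm_g} = |\ker\Nm_g/P(Y/Z)| = |\pi_0(\ker\Nm_g)|$, the proof reduces to the identity $|\pi_0(\ker\Nm_g)| = |\ker g^*|$. This is the main obstacle, but it is a standard self-duality statement for Jacobians: under the principal polarizations of $JY$ and $JZ$, the pull-back $g^*$ is the dual isogeny of $\Nm_g$, so $\ker g^*$ is Cartier-dual to the component group $\pi_0(\ker\Nm_g)$ and the two orders coincide. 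Equating the two expressions for $\deg\nu$ then yields $|\ker g^*|\cdot|g^*JZ\cap P(Y/Z)| = |JZ[d]|/|\ker g^*|$, which is the claimed formula.
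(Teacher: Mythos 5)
Your proof is correct: the identity $\Nm_g\circ g^*=d\cdot\mathrm{id}_{JZ}$, the complementary-subvariety argument for the addition map, and the duality statement $|\pi_0(\Ker \Nm_g)|=|\Ker g^*|$ do combine, exactly as you compute, to give $|g^*JZ\cap P(Y/Z)|=|JZ[d]|/|\Ker g^*|^2$. The paper offers no argument beyond citing \cite[Proposition 11.4.3]{bl} and \cite[Corollary 12.1.4]{bl}, which encode precisely the facts you use, so your write-up is essentially the same approach carried out in full detail.
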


We need a result on curves with an action of the Klein group. Let $Y$ be a curve with an action of the group
$$
V_4 = \langle r,s \;|\; r^2 = s^2 = (rs)^2 = 1 \rangle.
$$
Then we have the following diagram
\begin{equation} \label{diag1.4}
\xymatrix{
& Y \ar[d]^{a_r} \ar[dl]_{a_s} \ar[dr]^{a_{rs}} & \\
Y_s \ar[dr] & Y_r \ar[d] & Y_{rs} \ar[dl] \\
& Z &
}
\end{equation}
with $Y_v := Y/\langle v \rangle$ for any $v \in V_4$ and $Z= Y/V_4$. The following theorem is a special case of \cite[Theorem 3.2]{rr}.

\begin{prop} \label{prop1.6}
Suppose $a_r$ is \'etale, that $a_s$ respectively $a_{rs}$ are ramified at $2\alpha_s >0$,
respectively $2\alpha_{rs}>0$ points and $Z$ is of genus $g(Z)$. Then $P(Y_s/Z)$ and $P(Y_{rs}/Z)$
are subvarieties of $P(Y/Y_r)$ and the addition map
$$
\phi_r: P(Y_s/Z) \times P(Y_{rs}/Z)  \ra P(Y/Y_r)
$$
is an isogeny of degree $2^{2g(Z)}$.
\end{prop}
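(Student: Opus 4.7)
The plan is to exploit the $V_4$-isotypic decomposition of $JY$. Write $\chi_0, \chi_r, \chi_s, \chi_{rs}$ for the four characters of $V_4$ (indexed so that $\chi_v(v') = -1$ iff $v,v'$ are distinct non-identity elements), and $a: Y \to Z$ for the degree-$4$ quotient. Up to isogeny, the four isotypic components of $JY$ coincide with $a^*JZ$, $P(Y_r/Z)$, $P(Y_s/Z)$ (embedded via $a_s^*$) and $P(Y_{rs}/Z)$ (embedded via $a_{rs}^*$); the pullbacks $a^*$, $a_s^*$ and $a_{rs}^*$ are injective because all three covers are ramified. Since $\chi_s(r) = \chi_{rs}(r) = -1$, the $\chi_s$- and $\chi_{rs}$-isotypic components lie in $P(Y/Y_r)$, yielding the claimed inclusions together with the equality $a_s^*P(Y_s/Z) + a_{rs}^*P(Y_{rs}/Z) = P(Y/Y_r)$ of abelian subvarieties. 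A short Hurwitz calculation, using that the degree-$4$ cover $a$ ramifies only at the $2\alpha_s$ fixed points of $s$ and the $2\alpha_{rs}$ fixed points of $rs$, gives $g(Y_s) = 2g(Z)-1+\alpha_{rs}/2$, $g(Y_{rs}) = 2g(Z)-1+\alpha_s/2$ and $g(Y_r) = 2g(Z)-1+(\alpha_s+\alpha_{rs})/2$, so that $\dim P(Y_s/Z)+\dim P(Y_{rs}/Z) = \dim P(Y/Y_r)$; hence $\phi_r$ is an isogeny.

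Next, $\deg\phi_r$ equals the order of the intersection $I := a_s^*P(Y_s/Z) \cap a_{rs}^*P(Y_{rs}/Z) \subset JY$. Any $z\in I$ is fixed by $s^*$ and $(rs)^*$, hence by all of $V_4$, and $z \in P(Y/Y_r)$ imposes $r^*z = -z$, forcing $2z=0$. I would identify $I$ with $a^*(JZ[2])$ as follows. Set $f: Y_s \to Z$. For $\eta \in JZ[2]$ the element $a^*\eta = a_s^*(f^*\eta)$ lies in $a_s^*P(Y_s/Z)$ (since $f^*\eta$ is $2$-torsion and pulled back from $Z$, hence in $P(Y_s/Z) \cap f^*JZ$), and symmetrically in $a_{rs}^*P(Y_{rs}/Z)$, giving $a^*(JZ[2]) \subseteq I$. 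For the reverse containment, Lemma~\ref{lem1.5} applied to the ramified double cover $f$ yields $|f^*JZ \cap P(Y_s/Z)| = |JZ[2]|/|\ker f^*|^2 = 2^{2g(Z)}$; injectivity of $a_s^*$ transfers this to $|a^*JZ \cap a_s^*P(Y_s/Z)| = 2^{2g(Z)}$, and combined with the analogous count through $a_{rs}^*$ this pins down $I$ to exactly $a^*(JZ[2])$. Using injectivity of $a^*$, one gets $|I| = |JZ[2]| = 2^{2g(Z)}$, and hence $\deg\phi_r = 2^{2g(Z)}$.

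The main obstacle lies in the last step: one must rule out $V_4$-invariant $2$-torsion of $JY$ sitting simultaneously in both pullback Pryms without coming from $JZ[2]$, i.e.\ control the component group $(JY)^{V_4}/a^*JZ$. A clean verification uses the isotypic decomposition directly to show $V_{\chi_s} \cap V_{\chi_{rs}} = a^*(JZ[2])$ as subgroups of $JY$; alternatively, one can simply invoke \cite[Theorem 3.2]{rr}, of which this proposition is stated to be a special case.
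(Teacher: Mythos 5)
The first thing to note is that the paper does not actually prove this proposition: its entire ``proof'' is the sentence declaring it a special case of \cite[Theorem 3.2]{rr}. Your sketch therefore attempts more than the paper does, and most of it is sound. The Hurwitz computations and the resulting identity $\dim P(Y_s/Z)+\dim P(Y_{rs}/Z)=\dim P(Y/Y_r)$ are correct; the containments via the characters of $V_4$ are correct; the identification of $\ker\phi_r$ with $I=a_s^*P(Y_s/Z)\cap a_{rs}^*P(Y_{rs}/Z)$ (using injectivity of $a_s^*,a_{rs}^*$) and the observation that $I$ consists of $V_4$-invariant $2$-torsion are correct; and the inclusion $a^*(JZ[2])\subseteq I$ — which tacitly uses that $\ker(\Nm)$ is connected for the \emph{ramified} double covers $Y_s\ra Z$ and $Y_{rs}\ra Z$, as it is — gives the correct lower bound $2^{2g(Z)}\le\deg\phi_r$.

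The gap is exactly where you place it, and it is not closable by the means you propose. The reverse inclusion $I\subseteq a^*(JZ[2])$ requires excluding $V_4$-invariant $2$-torsion outside $a^*JZ$ that lies in both pullback Pryms, and the isotypic decomposition cannot ``show $V_{\chi_s}\cap V_{\chi_{rs}}=a^*(JZ[2])$ directly'': it is only an isogeny decomposition and carries no information about the pairwise intersections of the isotypic pieces — computing those intersections is precisely the content of \cite[Theorem 3.2]{rr}. The danger is concrete: writing $f_s:Y_s\ra Z$ and $\iota$ for its involution, the fixed group $(JY_s)^{\iota}$ has identity component $f_s^*JZ$ but a nontrivial $2$-torsion component group, generated by differences of ramification points, as soon as $f_s$ has more than two branch points; a priori such classes can lie in $P(Y_s/Z)$ and, after pullback, meet $a_{rs}^*P(Y_{rs}/Z)$. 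Your count $\left|a^*JZ\cap a_s^*P(Y_s/Z)\right|=2^{2g(Z)}$ via Lemma \ref{lem1.5} only measures the part of $I$ that is already known to lie in $a^*JZ$, so it yields no upper bound on $I$. Since your final fallback is to invoke \cite[Theorem 3.2]{rr}, your argument ultimately rests on the same citation as the paper's; as a self-contained proof it is incomplete at the one step that actually produces the number $2^{2g(Z)}$.
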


\section{A degree computation} \label{degcom}

As above, let $n=2^rm$ with $m$ odd and $r \geq 2$. Again we consider a
curve $X$ with action of the dihedral group 
$D_n := \langle  \sigma, \tau \;|\; \sigma^n = \tau^2 = (\sigma\tau)^2 = 1 \rangle.$
With the notation as in Section \ref{prel} we have  diagram \eqref{diag1.2} and
$s_0, s_1 \geq 2$. Then, apart from $f_1,f_2$ and $f_3$, all the maps in diagram \eqref{diag1.2}
are ramified. So the pullbacks of the corresponding Jacobians are embeddings. Recall that $P(f)$ denotes 
the Prym variety of the covering $f$.

We consider the isogenies  
$$
h:= \Nm a_{\tau\sigma^m} \circ  a^*_{\tau \sigma^{n/2}} : P(b_{\tau \sigma^{n/2}}) \lra P(b_{\tau\sigma^m}).
$$
and
$$
h' := \Nm a_{\tau \sigma^{n/2}} \circ  a^*_{\tau\sigma^m} : P(b_{\tau\sigma^m}) \lra P(b_{\tau \sigma^{n/2}}).
$$
Let
$$
A:=a^*_{\tau \sigma^{n/2}} (P(b_{\tau \sigma^{n/2}}) ) \qquad \mbox{and} \qquad
B:= a^*_{\tau\sigma^m} (P(b_{\tau\sigma^m}) )
$$ 
be subvarieties of $JX$.  
Now $\tau$ (respectively $\sigma^{n/2}$)  induces an involution on $X_{\tau \sigma^{n/2}}$ (respectively $X_{\tau \sigma^m})$, which we denote by the same letter. Thus the Prym variety $P(b_{\tau \sigma^{n/2}})$ is $\Ker (1+\tau)^0$
and $P(b_{\tau \sigma^m}) = \Ker (1+\sigma^{n/2})^0$. Hence we have 
(for example by \cite[Corollary 2.7]{rr}),
$$
A = \{ z \in JX^{\langle \tau \sigma^{n/2}\rangle}  \ \mid \ z + \tau z =0 \}^0,  \qquad B = \{ w \in JX^{\langle \tau\sigma^m \rangle}  \ \mid \ 
w + \sigma^{n/2} w =0 \}^0.
$$
Moreover, as in \cite{rr}, there is a commutative diagram:
\begin{equation} \label{prymdiag}
\xymatrix@R=1.3cm@C=1cm{
& A \ar[dr]_{\Nm {a_{\tau\sigma^m}}}  \ar[rr]^{1+\tau\sigma^m} &&  B \ar[dr]_{\Nm
 {a_{\tau\sigma^{n/2}}}}  \ar[rr]^{1+ \tau \sigma^{n/2}} && A \\
P(b_{\tau\sigma^{n/2}})  \ar[ur]^{a^*_{\tau\sigma^{n/2}}} \ar[rr]_h && 
 P(b_{\tau\sigma^m}) 
\ar[ur]^{a^*_{\tau\sigma^m}} \ar[rr]_{h'}
&& P(b_{\tau\sigma^{n/2}}) \ar[ur]^{a^*_{\tau\sigma^{n/2}}}  & 
}
\end{equation}

\begin{lem} \label{ker}
For any $n = 2^rm$ with $m$ odd and $r \geq 2$ we have 
$$
|\Ker h | = |\Ker(1+ \tau \sigma^m)_{|_A}|
$$
and 
$$
\Ker(1 + \tau \sigma^m)_A = (JX[2])^{\langle \tau, \sigma^m \rangle}.
$$
\end{lem}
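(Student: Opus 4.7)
The plan is to deduce both statements from the commutative diagram \eqref{prymdiag}. Its top row reads
$$
a^*_{\tau\sigma^m}\circ h \;=\; (1+\tau\sigma^m)\circ a^*_{\tau\sigma^{n/2}}.
$$
By Lemma~\ref{lem1.3} both double covers $a_{\tau\sigma^{n/2}}$ and $a_{\tau\sigma^m}$ are ramified (since $s_0,s_1\geq 2$), so their pullbacks on Jacobians are injective; hence $a^*_{\tau\sigma^{n/2}}$ restricts to an isomorphism $P(b_{\tau\sigma^{n/2}}) \to A$ while $a^*_{\tau\sigma^m}$ embeds $P(b_{\tau\sigma^m})$ into $JX$. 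Passing to kernels in the displayed identity and using these injectivities immediately yields $|\Ker h|=|\Ker(1+\tau\sigma^m)_A|$.

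For the explicit description, I would combine the defining conditions on $z\in A$, namely $\tau\sigma^{n/2}(z)=z$ and $(1+\tau)(z)=0$, with the kernel condition $(1+\tau\sigma^m)(z)=0$. From $\tau(z)=-z=\tau\sigma^m(z)$ one reads $\sigma^m(z)=z$; since $n/2=2^{r-1}m$ this forces $\sigma^{n/2}(z)=z$, and combined with $\tau\sigma^{n/2}(z)=z$ one gets $\tau(z)=z$. Together with $\tau(z)=-z$ this forces $2z=0$, so $z\in (JX[2])^{\langle\tau,\sigma^m\rangle}$. Conversely, for $z\in(JX[2])^{\langle\tau,\sigma^m\rangle}$ the fact that $n/2$ is a multiple of $m$ gives $\sigma^{n/2}(z)=z$, and using $-z=z$, $\tau(z)=z$, $\sigma^m(z)=z$ one checks directly that all three equations $\tau\sigma^{n/2}(z)=z$, $(1+\tau)(z)=0$ and $(1+\tau\sigma^m)(z)=0$ hold.

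The main obstacle is the last step in the converse direction: showing that such a symmetric 2-torsion point actually lies in the abelian subvariety $A$ (the identity component of the fixed locus), not merely in the algebraic subgroup of $JX$ cut out by those defining equations. I would handle this by exploiting the identification $A=a^*_{\tau\sigma^{n/2}}(P(b_{\tau\sigma^{n/2}}))$ and producing $z$ as the pullback of a 2-torsion element of $P(b_{\tau\sigma^{n/2}})$, using that the compatible $\tau$-action on $X_{\tau\sigma^{n/2}}$ makes $a^*_{\tau\sigma^{n/2}}$ carry the 2-torsion of the Prym bijectively onto the required subgroup of $A[2]$. Once both inclusions are established, the lemma follows.
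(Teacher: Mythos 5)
Your argument is essentially the paper's own proof: the first identity is read off from diagram \eqref{prymdiag} using that $a^*_{\tau\sigma^{n/2}}$ and $a^*_{\tau\sigma^m}$ are injective (the covers being ramified since $s_0,s_1\geq 2$), and the identification of $\Ker(1+\tau\sigma^m)_{|_A}$ proceeds by exactly the same manipulation of the relations $\tau\sigma^{n/2}z=z$, $\tau z=-z$, $\tau\sigma^m z=-z$ to deduce $\sigma^m z=z$ and $2z=0$. The one place you diverge is in flagging the converse inclusion $(JX[2])^{\langle\tau,\sigma^m\rangle}\subseteq A$ (i.e.\ that such a $2$-torsion point lies in the connected component $A$, not just in the algebraic subgroup cut out by the defining equations) as an obstacle needing a separate argument; the paper simply asserts the ``if and only if'' without comment, so here you are being more careful than the authors, although your proposed fix --- realizing $z$ as $a^*_{\tau\sigma^{n/2}}$ of a $2$-torsion point of $P(b_{\tau\sigma^{n/2}})$ --- is left as a sketch rather than carried out.
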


\begin{proof}
The first assertion follows from diagram \eqref{prymdiag}, since 
$a^*_{\tau \sigma^{n/2}}: P(b_{\tau \sigma^{n/2}}) \ra A$ and
$a^*_{\tau \sigma^m}: P(b_{\tau \sigma^{m}}) \ra B$ are isomorphisms.
 For the last assertion note that
$z \in \Ker (1+\tau \sigma^m)_{|_A}$ if and only if
$$
\tau\sigma^{n/2} z = z,  \qquad \tau z=  -z \qquad \tau\sigma^m(z)=-z, 
$$
which implies that $\sigma^mz=z$. So
 $$
 z = \tau \sigma^{n/2} z = \tau \sigma^{2^{r-1}m} z = \tau (\sigma^m)^{2^{r-1}}(z) = \tau z = -z,
 $$
then $z \in A[2]$. 

Therefore $z \in \Ker (1+\tau \sigma^m)_{|_A}$ if and only if
$ z \in JX[2]$ such that $\tau z = z$ and $\sigma^m z =z$ which was to be shown.
\end{proof}

The following proposition is a generalization of a special case of \cite[Theorem 4.1,(ii)]{rr}.

\begin{prop} \label{p2.4}
For every $n=2^rm$ with $m$ odd and $r \geq 2$, we have 
$$
 \deg h = 2^{(m-1)(g-1)+s_1-2 }.
$$
\end{prop}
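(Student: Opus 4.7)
The plan is to apply Lemma \ref{ker}, so $\deg h = |(JX[2])^G|$ with $G := \langle \tau, \sigma^m\rangle$, and to compute this 2-torsion count via the tower $X \to Y \to Z$, where $Y := X/\langle\sigma^m\rangle$ and $Z := X/G = Y/\tau$. Note that $G$ is dihedral of order $2^{r+1}$: $\sigma^m$ has order $2^r$ in $\langle \sigma \rangle$ and $\tau\sigma^m\tau = \sigma^{-m}$.

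As a first step I would compute $g(Z)$. Since $Y\to H$ is an étale cyclic cover of odd degree $m$, $g(Y)=m(g-1)+1$. The induced $D_m$-action on each $m$-element fiber of $Y\to H$ over a Weierstrass point of $H$ has $\sigma$ acting cyclically-transitively and $\tau$ as an involution, so by Lemma \ref{lem1.2}(a), $\tau$ has exactly one fixed point per such fiber. This gives $2g+2$ ramification points for $Y\to Z$, and Hurwitz yields $g(Z) = (m-1)(g-1)/2$.

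Writing $|(JX[2])^G| = |(JX^G)^0[2]| \cdot |\pi_0(JX^G)[2]|$, I would handle the first factor as follows: every reflection $\tau\sigma^{km}\in G$ has fixed points on $X$ (either $2s_0$ or $2s_1$ of them, depending on the parity of $k$), and $\tau$ together with $\tau\sigma^m$ generates $G$; hence the inertia subgroups of the Galois cover $\pi_Z\colon X\to Z$ together generate $G$, forcing $\Ker\pi_Z^*=0$ and identifying $(JX^G)^0$ with $JZ$. This contributes the factor $2^{2g(Z)}=2^{(m-1)(g-1)}$.

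The crux is to show $|\pi_0(JX^G)[2]|=2^{s_1-2}$. I would attempt this by exhibiting explicit $G$-invariant 2-torsion classes on $X$ that do not lie in $\pi_Z^*JZ$, naturally indexed (up to two global relations) by the $s_1$ Weierstrass points of $H$ above which $\tau\sigma^m$ has fixed points on $X$; the two cutting relations should be a total-sum relation together with one coming from the hyperelliptic bundle of $H$. Alternatively, one could apply Lemma \ref{lem1.5} and Proposition \ref{prop1.6} iteratively to the double covers and Klein-subgroup quotients in diagram \eqref{diag1.2}, assembling $\deg h$ as a product of degrees along the tower. The main obstacle is the non-abelian (dihedral) structure of $X\to Z$: cyclic-cover techniques do not directly yield the component-group count, and correctly identifying the two cutting relations is the delicate step.
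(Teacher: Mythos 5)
There is a genuine gap, and you acknowledge it yourself: the entire content of the proposition is the factor $2^{s_1-2}$, i.e.\ the count $|\pi_0(JX^G)[2]|$, and your proposal leaves exactly this step open as ``the main obstacle''. The preliminary bookkeeping is fine --- $g(Z)=\frac{1}{2}(m-1)(g-1)$ is correct (it agrees with $g(X_{T_\tau})$ in Corollary \ref{cor1.4} for $r=2$, where $T_\tau=\langle\tau,\sigma^m\rangle$), and identifying $(JX^G)^0$ with $JZ$ because the inertia subgroups generate $G$ is sound --- but without the component-group count you have proved nothing beyond what Lemma \ref{ker} already asserts.

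Moreover, the route you propose cannot be completed as described, for a structural reason: the group $(JX[2])^{G}$ with $G=\langle\tau,\sigma^m\rangle$ depends only on the subgroup $G\subset\Aut(X)$, and $G=\langle\tau\sigma^m,\sigma^m\rangle$ as well. Replacing the chosen lift $\tau$ of the hyperelliptic involution by $\tau\sigma^m$ is again an admissible presentation of $D_n$ and interchanges $s_0$ and $s_1$ (since $\tau\sigma^{2m}$ is conjugate to $\tau$ by a power of $\sigma$ preserving the fibres over $\PP^1$), while leaving $G$, hence $(JX[2])^G$, $JX^G$ and $\pi_0(JX^G)$, unchanged. So $|(JX[2])^G|$ and $|\pi_0(JX^G)[2]|$ are necessarily symmetric in $(s_0,s_1)$ and cannot equal $2^{s_1-2}$ times a symmetric quantity unless $s_0=s_1$, which fails in general. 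The asymmetry in $\deg h$ is carried by the particular connected component $A=a_{\tau\sigma^{n/2}}^*P(b_{\tau\sigma^{n/2}})$ of the (disconnected) fixed-point locus, and that information is discarded the moment you pass to the full group $(JX[2])^G$; so either not all of your candidate $2$-torsion classes actually lie in $A$, or your factorization cannot produce the stated exponent. The paper's proof is entirely different and sidesteps this: the case $r=2$ is obtained by applying Recillas--Rodriguez \cite[Theorem 4.1(ii)]{rr} to the $D_4$-action of $\langle\sigma^m,\tau\rangle$ (this is where the hard count, with the correct appearance of $s_1$, is outsourced) together with Corollary \ref{cor1.4}; the general case then follows by induction on $r$, applying the hypothesis to $X_{\sigma^{n/2}}$ and showing $|\Ker h|=|\Ker h_{n/2}|$. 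Your ``alternative'' of iterating Lemma \ref{lem1.5} and Proposition \ref{prop1.6} along diagram \eqref{diag1.2} is closer in spirit to this, but it is not carried out.
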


\begin{proof}
The proof is by induction on the exponent $r \geq 2$. Suppose first $r = 2$, i.e. $n =4m$.  Consider the curve $X$ with  the action of the dihedral subgroup 
$$
D_4 := \langle \sigma^{m}, \tau  \rangle \subset D_n.
$$
It has 2 non-conjugate Kleinian subgroups, namely 
 $K_\tau = \langle \sigma^{2m}, \tau \rangle$ and
 $K_{\tau \sigma^m} = \langle \sigma^{2m}, \tau \sigma^{m} \rangle$ 
Note that by \eqref{e1.2}, $T_\tau = T_{\tau \sigma^m}= \langle \sigma^{m}, \tau  \rangle$. Then according to
 \cite[Theorem 4.1,(ii)]{rr} 
$$
 |\Ker h| = 2^{2 g(X_{T_\tau}) -2 + s_1}.
$$
So Corollary \ref{cor1.4} give the proposition in this case.

Suppose now $r \geq 3$ and the proposition holds for $r-1$. Let $X$ be a curve with an action of $D_n$ with $X/\langle \sigma \rangle = H$, so that we 
have the diagram \eqref{diag1.2}. Then the subgroup $D_\frac{n}{2} = \langle \sigma^2, \tau \rangle$ of index 2 acts of the curve $X_{n/2}$, 
so that we can apply the inductive hypothesis.
This gives that the map 
$$
h_{n/2} := \Nm c_{\tau \sigma^m} \circ c_{\tau \sigma^{n/2}}^*: P(d_{\tau \sigma^{n/2}}) \lra P(d_{\tau \sigma^m})
$$
is an isogeny of degree $2^{(m-1)(g-1) -2 + s_1}$.

Hence it suffices to show that 
$$
\Ker h = b^*_{\tau \sigma^{n/2}}( \Ker h_{n/2}).
$$ 
This implies the proposition, since the map $b^*_{\tau\sigma^{n/2}}$ is injective.

Now Lemma \ref{ker} applied to the induction hypothesis, i.e. to $h_\frac{n}{2}$ gives
$$
|\Ker h_\frac{n}{2}| =|(JX_{\tau \sigma^{n/2}}[2])^{\langle \tau, \sigma^m \rangle}| =
	|a^*_{\tau \sigma^{n/2}}(JX_{\tau \sigma^{n/2}}[2])^{\langle \tau, \sigma^m \rangle}|.
$$
But 
\begin{eqnarray*}
a^*_{\tau \sigma^{n/2}}(JX_{\tau \sigma^{n/2}}[2])^{\langle \tau, \sigma^m \rangle} &=&
\{ z \in JX[2] \;|\; \tau \sigma^{n/2} z = z, \tau z = z, \sigma^mz=z \}\\
&=& \{ z \in JX[2] \;|\; \tau z = z, \sigma^mz=z \},
\end{eqnarray*}
since the equation $\tau \sigma^{n/2} z = z$ is a consequence of the last 2 equations.
This gives 
$$
|\Ker h| = |\Ker h_\frac{n}{2}|
$$ 
which completes the proof of Proposition \ref{p2.4}.
\end{proof}

\section{Decomposition for $n= 2^rm, \; r\geq 2$ with $m$ odd}

Now let the notation be as in Section 1 with $n = 2^r m, \; r \geq 2$ and $m$ odd. 
Let $f: X \ra H$ be a cyclic \'etale covering of degree $n$ of a hyperellitic curve $H$.
The main result of the paper is the following theorem.

\begin{thm} \label{thm4.1}
Let $n$ and $f: X \ra H$ be as above. Then
$JX_\tau$ and $JX_{ \tau \sigma^m}$ are abelian subvarieties of the Prym variety $P(f)$  and the addition map
$$
a: JX_\tau \times JX_{\tau \sigma^m} \ra P(f)
$$
is an isogeny of degree $  2^{[(2^{r} -r-1)m -(r-1)](g-1)}.$ 
\end{thm}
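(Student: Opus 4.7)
The plan is to proceed by induction on $r \geq 1$, with base case $r = 1$ furnished by Theorem~\ref{thm2.1}(b), where $a$ is an isomorphism and the claimed exponent $[(2-2)m - 0](g-1)$ vanishes. For the inductive step I will exploit the tower $X \xrightarrow{f_1} Y := X_{\sigma^{n/2}} \xrightarrow{\beta} H$, with $\beta := f_2 \circ f_3$ a cyclic \'etale cover of degree $n/2 = 2^{r-1}m$. Noting that $JY_\tau = JX_{K_\tau}$ and $JY_{\tau\sigma^m} = JX_{K_{\tau\sigma^m}}$, the inductive hypothesis applied to $\beta$ yields an isogeny $a_Y\colon JY_\tau \times JY_{\tau\sigma^m} \to P(\beta)$ of degree $2^{[(2^{r-1} - r)m - (r-2)](g-1)}$. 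Since the exponent in the theorem grows by exactly $[(2^{r-1}-1)m - 1](g-1)$ when $r$ increases by $1$, the inductive step reduces to proving $\deg a_X = \deg a_Y \cdot 2^{[(2^{r-1}-1)m-1](g-1)}$.

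First I will verify that $a$ is an isogeny: by Corollary~\ref{cor1.4} and Lemma~\ref{lem1.3}(i), $\dim JX_\tau + \dim JX_{\tau\sigma^m} = n(g-1) + 2 - (s_0+s_1)/2 = (n-1)(g-1) = \dim P(f)$, and one sees $JX_\tau + JX_{\tau\sigma^m} = P(f)$ from the $D_n$-isotypic decomposition of $P(f)$ (each two-dimensional irreducible of $D_n$ contains distinct $\tau$- and $\tau\sigma^m$-fixed lines that together span it). Thus $\deg a = |JX_\tau \cap JX_{\tau\sigma^m}|$ inside $JX$. To compute this intersection I will decompose everything under the central involution $\sigma^{n/2}$: on the $(+1)$-eigencomponent the restrictions of $JX_\tau$, $JX_{\tau\sigma^m}$, $P(f)$ are $JX_{K_\tau}$, $JX_{K_{\tau\sigma^m}}$, $f_1^*P(\beta)$ respectively, so the restriction of $a$ is essentially $f_1^* \circ a_Y$, with contribution to $\deg a$ controlled by the induction hypothesis and $|\ker f_1^*| = 2$ (Lemma~\ref{lem1.5}); on the $(-1)$-eigencomponent the restrictions are $P(p)$, $P(p')$, $P(f_1)$, where $p\colon X_\tau \to X_{K_\tau}$ and $p'\colon X_{\tau\sigma^m} \to X_{K_{\tau\sigma^m}}$ are the degree $2$ maps induced by $\sigma^{n/2}$, and the restricted map is the addition $P(p) \times P(p') \to P(f_1)$.

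The heart of the proof---and the main obstacle---is computing the degree of $P(p) \times P(p') \to P(f_1)$. I will apply Proposition~\ref{prop1.6} to each of the Klein subgroups $K_\tau$ and $K_{\tau\sigma^m}$ acting on $X$: in both cases the \'etale double quotient is $f_1$ (since $\sigma^{n/2}$ acts freely) while the other two double quotients are ramified, by Lemma~\ref{lem1.3}. This produces two isogeny decompositions $P(p) \times P(b_{\tau\sigma^{n/2}}) \to P(f_1)$ and $P(p') \times P(b_{\tau\sigma^m}) \to P(f_1)$ of degrees $2^{2g(X_{K_\tau})}$ and $2^{2g(X_{K_{\tau\sigma^m}})}$; the two are bridged by the isogeny $h$ of Proposition~\ref{p2.4}, of degree $2^{(m-1)(g-1)+s_1-2}$. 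A diagram chase through the resulting square, combined with careful bookkeeping of the 2-torsion contributions (the kernel of $f_1^*$; the intersection $f_1^*P(\beta) \cap P(f_1)$ inside $P(f)$; the intersection $P(p) \cap P(p')$ identified via $h$), produces the required factor; after substituting $s_0 + s_1 = 2g+2$ and the genera of Corollary~\ref{cor1.4}, the cumulative exponent simplifies to $[(2^r - r - 1)m - (r-1)](g-1)$.
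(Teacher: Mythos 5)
Your base case and the arithmetic of the inductive step are correct (the exponent does grow by exactly $[(2^{r-1}-1)m-1](g-1)$ when $r$ increases by one, consistent with the auxiliary degrees in the paper), and you have identified essentially all the ingredients the actual proof uses: the intermediate cover $X_{\sigma^{n/2}}\to H$ of degree $n/2$ carrying the induction, Proposition \ref{prop1.6} for the Klein group $\langle\sigma^{n/2},\tau\rangle$, the bridge isogeny $h$ of Proposition \ref{p2.4}, and Lemma \ref{lem1.5} for the various addition maps. The dimension count showing that $a$ is an isogeny is also fine.

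The genuine gap is the step you describe as ``a diagram chase \dots combined with careful bookkeeping of the 2-torsion contributions'': that sentence is where the entire theorem lives, and the scheme you propose for it --- computing $\deg a=|JX_\tau\cap JX_{\tau\sigma^m}|$ by restricting to the $\pm1$ eigencomponents of $\sigma^{n/2}$ and multiplying the contributions --- does not work as stated. The decompositions $JX_{K_\tau}\times P(b_\tau)\to JX_\tau$ and $JX_{K_{\tau\sigma^m}}\times P(b_{\tau\sigma^m})\to JX_{\tau\sigma^m}$ are isogenies with kernels of order $2^{2g(X_{K_\tau})}$ and $2^{2g(X_{K_{\tau\sigma^m}})}$, so the intersection $JX_\tau\cap JX_{\tau\sigma^m}$ does not split as a product of an intersection in the $(+1)$-part and one in the $(-1)$-part; you would need to control precisely how those kernels meet the intersection, and you give no argument for that. (Also, your second invocation of Proposition \ref{prop1.6} is garbled: applied to $K_{\tau\sigma^m}$ it yields $P(b_{\tau\sigma^m})\times P(X_{\tau\sigma^{m+n/2}}/X_{K_{\tau\sigma^m}})\to P(f_1)$, whereas with your definition of $p'$ the two factors you write down are the same Prym; the extra curve $X_{\tau\sigma^{m+n/2}}$ plays no role in the final answer and the paper uses Proposition \ref{prop1.6} only once.) The paper sidesteps every intersection computation: it builds one explicit chain of addition maps $\widetilde\phi_n\colon f^*JH\times JX_{K_\tau}\times JX_{K_{\tau\sigma^m}}\times P(b_\tau)\times P(b_{\tau\sigma^{n/2}})\to JX$, whose degree is a product of degrees of composed isogenies (the inductive map, Lemma \ref{lem1.5} twice, Proposition \ref{prop1.6} once), then replaces $P(b_{\tau\sigma^{n/2}})$ by $P(b_{\tau\sigma^m})$ dividing by $\deg h$, then peels off $f^*JH$, and finally regroups the remaining four factors into $JX_\tau\times JX_{\tau\sigma^m}$, dividing by the degrees of the two regrouping isogenies. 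Every step is a quotient of degrees of explicit composites, so no further 2-torsion bookkeeping is needed. To repair your argument you would either have to adopt this chaining structure or actually prove the eigencomponent factorization of the intersection with the correction factors it implicitly requires.
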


The proof is by induction on $r$. Since the proofs for $r=2$ and for the inductive step 
in case $r \geq 3$ are almost the 
same, we give them simultaneously. The difference is only that for $r = 2$ we use 
Theorem \ref{thm2.1} instead of the induction hypothesis.

So in the whole of this section we assume that for $r \geq 3$, Theorem \ref{thm4.1} is true for $r-1$, 
i.e. for covering of degree $2^{r-1}m$ for all $m$. Let $r \geq 2$ and $f: X \ra H$ be an \'etale covering of degree $n = 2^rm$ with odd $m \geq 1$.
We use the notation of diagram \eqref{diag1.2}. In addition let $b_\tau: X_\tau \ra X_{K_\tau}$ denote the canonical projection.

\begin{prop}  \label{prop4.2}
The varieties $JX_{K_{\tau}}, JX_{K_{\tau \sigma^m}}, P(b_{\tau})$ and 
$P(b_{\tau \sigma^{n/2}})$ are abelian 
subvarieties of $JX$ and the addition map 
$$
 \widetilde \phi_n: f^*JH \times JX_{K_{\tau}} \times JX_{K_{\tau \sigma^m}} \times 
P(b_{\tau}) \times P(b_{\tau \sigma^{n/2}}) \ra JX
$$
is an isogeny of degree
$$
\deg \widetilde \phi_n =  m^{2g-2} \cdot 2^{[(2^{r+1} -r)m +r](g-1) +2 -s_0}.
$$
\end{prop}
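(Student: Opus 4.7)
The plan is to construct $\widetilde{\phi}_n$ as a composition of three simpler isogenies along the tower $X \xrightarrow{f_1} X_{\sigma^{n/2}} \xrightarrow{f_3\circ f_2} H$, using Lemma~\ref{lem1.5}, Proposition~\ref{prop1.6} applied to the Klein group $K_\tau$, and the inductive hypothesis: for $r \geq 3$ one applies Theorem~\ref{thm4.1} for $r-1$ to the \'etale cyclic cover $X_{\sigma^{n/2}} \to H$ of degree $n/2 = 2^{r-1}m$, and for the base case $r = 2$ one uses Theorem~\ref{thm2.1}(b) (since then $n/2 = 2m \equiv 2 \pmod 4$ and the inductive degree is $1$).

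First I would check that each summand embeds as an abelian subvariety of $JX$ and that the dimensions add up. The Pryms $P(b_\tau)$ and $P(b_{\tau\sigma^{n/2}})$ embed via $a_\tau^*$ and $a_{\tau\sigma^{n/2}}^*$, which are injective since the source maps are ramified double covers (Lemma~\ref{lem1.3}(iii) and Corollary~\ref{cor1.3}). For $JX_{K_\tau}$ I factor the quotient $X \to X_{K_\tau}$ as $X \xrightarrow{f_1} X_{\sigma^{n/2}} \xrightarrow{c_{\tau\sigma^{n/2}}} X_{K_\tau}$: the second map is ramified, so $c_{\tau\sigma^{n/2}}^*$ is injective, and the descent argument below shows $\ker f_1^* \cap c_{\tau\sigma^{n/2}}^*JX_{K_\tau} = 0$, making the composite injective; likewise for $JX_{K_{\tau\sigma^m}}$. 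Summing the dimensions given by Corollary~\ref{cor1.4} and using $s_0 + s_1 = 2g+2$ recovers $\dim JX = n(g-1)+1$.

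Next I would factor $\widetilde{\phi}_n = \phi_1 \circ (\phi_A \times \phi_C)$. The map $\phi_1: f_1^*JX_{\sigma^{n/2}} \times P(f_1) \to JX$ has degree $2^{n(g-1)}$ by Lemma~\ref{lem1.5} applied to the \'etale double cover $f_1$. The map $\phi_C: P(b_\tau) \times P(b_{\tau\sigma^{n/2}}) \to P(f_1)$ has degree $2^{2g(X_{K_\tau})} = 2^{\frac{n}{2}(g-1)+2-s_0}$ by Proposition~\ref{prop1.6} applied to $K_\tau$, the hypotheses $2\alpha_s = 2\alpha_{rs} = 2s_0 > 0$ being checked via Lemma~\ref{lem1.3} and Hurwitz. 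The third piece $\phi_A: f^*JH \times JX_{K_\tau} \times JX_{K_{\tau\sigma^m}} \to f_1^*JX_{\sigma^{n/2}}$ I would obtain by pulling back, via $f_1^*$, the composition of the inductive isogeny $JX_{K_\tau} \times JX_{K_{\tau\sigma^m}} \to P(f_3\circ f_2)$ of degree $d_3 := 2^{[(2^{r-1}-r)m-(r-2)](g-1)}$ with the Lemma~\ref{lem1.5} addition $(f_3\circ f_2)^*JH \times P(f_3\circ f_2) \to JX_{\sigma^{n/2}}$ of degree $(n/2)^{2g-2}$.

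The main obstacle is computing $\deg\phi_A$ correctly. I would set up a commutative square whose top row is the isogeny $\phi'_A$ of degree $d_3(n/2)^{2g-2}$ from $(f_3\circ f_2)^*JH \times c_{\tau\sigma^{n/2}}^*JX_{K_\tau} \times c_{\tau\sigma^m}^*JX_{K_{\tau\sigma^m}}$ onto $JX_{\sigma^{n/2}}$, whose right vertical is $f_1^*$ of degree $2$, whose bottom row is $\phi_A$, and whose left vertical is the componentwise application of $f_1^*$. Writing $\eta = (f_3\circ f_2)^*L$ for the $2$-torsion bundle on $X_{\sigma^{n/2}}$ defining $f_1$ (where $L \in JH[n]$ defines the cover $f$), one has $\eta \in (f_3\circ f_2)^*JH$ but $\eta \notin c_{\tau\sigma^{n/2}}^*JX_{K_\tau}$ and $\eta \notin c_{\tau\sigma^m}^*JX_{K_{\tau\sigma^m}}$: were $\eta = c^*\nu$ for some $2$-torsion $\nu$, the \'etale double cover $f_1$ of $X_{\sigma^{n/2}}$ would descend to an \'etale double cover of $X_{K_\tau}$ (resp.\ $X_{K_{\tau\sigma^m}}$), necessarily isomorphic to one of the intermediate $K_\tau$-subcovers $X_\tau \to X_{K_\tau}$ or $X_{\tau\sigma^{n/2}} \to X_{K_\tau}$ (resp.\ the analogous subcovers of $X_{K_{\tau\sigma^m}}$), all of which are ramified by Corollary~\ref{cor1.3}. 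Hence the left vertical has kernel of order exactly $2$ (coming only from the first factor), and matching degrees in the square gives $\deg\phi_A = d_3(n/2)^{2g-2}$. Multiplying $\deg\phi_1 \cdot \deg\phi_A \cdot \deg\phi_C$ and substituting $n = 2^rm$ collapses the exponent of $2$ to $[(2^{r+1}-r)m + r](g-1) + 2 - s_0$, yielding the claimed formula.
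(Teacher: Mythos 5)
Your proposal is correct and follows essentially the same route as the paper: the same factorization of $\widetilde\phi_n$ through $f_1^*JX_{\sigma^{n/2}} \times P(f_1)$, with Lemma~\ref{lem1.5} for the two addition maps along the tower, Proposition~\ref{prop1.6} for the Klein group $\langle \sigma^{n/2},\tau\rangle$, and Theorem~\ref{thm2.1}(b) (for $r=2$) or the inductive hypothesis (for $r\geq 3$) applied to $X_{\sigma^{n/2}}\to H$. The only substantive difference is that you justify the step the paper dismisses with ``Clearly $\alpha_1$ and its pullback via $f_1^*$ are of the same degree'' by showing that the $2$-torsion bundle $\eta$ defining $f_1$ lies in $(f_3\circ f_2)^*JH$ but not in $c^*_{\tau\sigma^{n/2}}JX_{K_\tau}$ or $c^*_{\tau\sigma^m}JX_{K_{\tau\sigma^m}}$ (since otherwise an \'etale intermediate double cover of the relevant Galois $V_4$-cover would exist, contradicting Corollary~\ref{cor1.3}), which is a correct and welcome addition of detail.
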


\begin{proof}
All maps in diagram \eqref{diag1.2} are ramified apart from $f_1, f_2$ and $f_3$, which gives the 
first assertion. 
The dihedral group $D_{n/2} = \langle \sigma^2, \tau \rangle$ acts on the curve $X_{\sigma^{n/2}}$. 

If $r =2$, we can apply Theorem \ref{thm2.1} to get that the canonical map
$$
\alpha: JX_{K_\tau} \times JX_{K_{\tau \sigma^m}} \ra P(f_3 \circ f_2)
$$
is an isomorphism. For $r \geq 3$ we can apply the induction hypothesis, which gives that $\alpha$ 
 is an isogeny of degree $2^{[(2^{r-1} - r)m - (r-2)](g-1)}$. Since this number is equal to 1 for $r = 2$, this is valid for all $r \geq 2$.

 Now the addition map 
$\alpha_1: (f_3 \circ f_2)^*JH \times JX_{K_\tau} \times JX_{K_{\tau \sigma^m}} \ra JX_{\sigma^{n/2}}$ factorizes as
$$
\xymatrix{
(f_3 \circ f_2)^*JH \times JX_{K_\tau} \times JX_{K_{\tau \sigma^m}} \ar[rr]^(0.6){\alpha_1} \ar[dr]_{id \times \alpha} && JX_{\sigma^{n/2}}\\
 &(f_3 \circ f_2)^*JH \times P(f_3 \circ f_2) \ar[ur]_(0.6){\psi} &  \\
}
$$
where $\psi$ is the addition map. So Lemma \ref{lem1.5} implies that
$$
\deg \alpha_1 = \deg \alpha \cdot \deg \psi = 2^{[(2^{r-1} - r)m - (r-2)](g-1)} \cdot (2^{r-1}m)^{2g-2} = m^{2g-2} \cdot 2^{[(2^{r-1} - r)m + r)](g-1)}.
$$
Clearly $\alpha_1$ and its  pullback via $f_1^*$  are  of the same degree. 
Moreover, considering $X$ with the action of the Klein group 
$\langle \sigma^{n/2}, \tau \rangle$, we have the diagram
$$
\xymatrix{
& X \ar[d]^{f_1} \ar[dl]_(0.4){a_\tau} \ar[dr]^{a_{\tau \sigma^{n/2}}}& \\
X_{\tau} \ar[dr]_{b_\tau} & X_{\sigma^{n/2}} \ar[d]^(0.4){c_{\tau \sigma^{n/2}}} & X_{\tau \sigma^{n/2}} 
\ar[dl]^{b_{\tau \sigma^{n/2}}} \\
& X_{K_\tau} &
}
$$
Then Proposition \ref{prop1.6} gives that the addition map
$$
\alpha_2:  P(b_\tau) \times P(b_{\tau \sigma^{n/2}}) \ra P(f_1)
$$
is an isogeny of degree $2^{2g(X_{K_\tau})} = 2^{2^{r-1}m(g-1) +2 -s_0}$.

Now note that the map $\widetilde \phi_n$ factorizes as
$$
\xymatrix@C=1.8cm@R=1.5cm{
  \left[f^*JH \times JX_{K_\tau} \times JX_{K_{\tau \sigma^m}} \right] \times \left[ P(b_\tau) 
\times P(b_{\tau \sigma^{n/2}}) \right] \ar[r]^-{\widetilde \phi_n} \ar[d]_{f_1^*\alpha_1 \times \alpha_2} & JX\\
 f_1^*JX_{\sigma^{n/2}} \times P(f_1) \ar[ur]_{\alpha_3} &  \\
}
$$
By Lemma \ref{lem1.5} the addition map $\alpha_3$ is an isogeny of degree $2^{2g(X_{\sigma^{n/2}})-2} = 
2^{2^rm(g-1)}$, therefore the map  $\widetilde \phi_n$ is an isogeny of degree 
\begin{eqnarray*}
\deg \widetilde \phi_n &=& \deg f_1^*\alpha_1 \cdot \deg \alpha_2 \cdot \deg \alpha_3 \\
&=& m^{2g-2} \cdot 2^{[(2^{r-1} - r)m + r)](g-1)} \cdot 2^{2^{r-1}m(g-1) +2 -s_0} \cdot 2^{2^rm(g-1)}\\ 
&=& m^{2g-2} \cdot 2^{[(2^{r+1} -r)m +r](g-1) +2 -s_0}.
\end{eqnarray*}
\end{proof}

\begin{cor} \label{cor4.3}
The canonical map
$$
 \phi_n: f^*JH \times JX_{K_\tau} \times JX_{K_{\tau\sigma^m}} \times P(b_\tau) 
\times P(b_{\tau \sigma^m}) \ra JX
$$
is an isogeny of degree 
$$
\deg \phi_n = m^{2g-2} 2^{[(2^{r+1} -r - 1)m +r-1](g-1)}.
$$
\end{cor}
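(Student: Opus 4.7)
The plan is to derive Corollary \ref{cor4.3} as a direct consequence of Proposition \ref{prop4.2}, by using the isogeny $h:P(b_{\tau\sigma^{n/2}}) \ra P(b_{\tau\sigma^m})$ of Proposition \ref{p2.4} to replace the last factor of $\widetilde\phi_n$. First I would verify well-definedness of $\phi_n$: Lemma \ref{lem1.3} ensures that $a_{\tau\sigma^m}$ is ramified so $P(b_{\tau\sigma^m})$ embeds in $JX$ via $a^*_{\tau\sigma^m}$, and a dimension count using Corollary \ref{cor1.4} together with $s_0+s_1=2g+2$ shows that the source of $\phi_n$ has the same dimension as $JX$; hence $\phi_n$ is an isogeny iff it is surjective.

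Next, following the factorization used in the proof of Proposition \ref{prop4.2}, I would write $\phi_n = \alpha_3 \circ (f_1^*\alpha_1 \times \alpha_2')$, where $\alpha_1, \alpha_3$ are unchanged and only the map $\alpha_2: P(b_\tau)\times P(b_{\tau\sigma^{n/2}}) \ra P(f_1)$ (coming from Proposition \ref{prop1.6} applied to $K_\tau$) gets replaced by $\alpha_2': P(b_\tau)\times P(b_{\tau\sigma^m}) \ra P(f_1)$. Since $\alpha_1$ and $\alpha_3$ contribute identical degree factors, the computation reduces to showing $\alpha_2'$ is an isogeny with $\deg\alpha_2' = \deg\alpha_2/\deg h$. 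The key step is to construct an isogeny $\beta:P(b_\tau)\times P(b_{\tau\sigma^{n/2}}) \ra P(b_\tau)\times P(b_{\tau\sigma^m})$ of degree $\deg h$ satisfying $\alpha_2'\circ\beta = \alpha_2$. By diagram \eqref{prymdiag} one has $a^*_{\tau\sigma^m}\circ h = (1+\tau\sigma^m)\circ a^*_{\tau\sigma^{n/2}}$, so the naive candidate $\beta_0 = \mathrm{id}\times h$ produces $\alpha_2'\circ\beta_0(x,y) = \alpha_2(x,y) + \tau\sigma^m(a^*_{\tau\sigma^{n/2}}(y))$, and one must absorb this discrepancy into a translation in the $P(b_\tau)$-factor.

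The hard part will be verifying this absorption. For $r=2$, one has $2m-n/2 = 0$ and $\tau\sigma^{2m} = \tau\sigma^{n/2}$, so a direct check shows that $\tau\sigma^m$ carries $A = a^*_{\tau\sigma^{n/2}}(P(b_{\tau\sigma^{n/2}}))$ onto $P(b_\tau)$ inside $JX$, giving the desired $\beta$ with $\eta(y):=-\tau\sigma^m(a^*_{\tau\sigma^{n/2}}(y))\in P(b_\tau)$; for $r\geq 3$ the image $\tau\sigma^m(A)$ sits inside a different Klein-type subvariety of $P(f_1)$ and the argument is more delicate, for instance by showing $P(b_\tau)\cap B = (JX[2])^{\langle\tau,\sigma^m\rangle}$ via Lemma \ref{ker} and carefully tracking the connected components of the various kernels. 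Once the factorization $\alpha_2'\circ\beta = \alpha_2$ is established, the degree follows: combining $\deg\widetilde\phi_n = m^{2g-2}\cdot 2^{[(2^{r+1}-r)m+r](g-1)+2-s_0}$ from Proposition \ref{prop4.2} with $\deg h = 2^{(m-1)(g-1)+s_1-2}$ from Proposition \ref{p2.4} and the relation $s_0+s_1 = 2g+2$ yields $\deg\phi_n = m^{2g-2}\cdot 2^{[(2^{r+1}-r-1)m+r-1](g-1)}$, as claimed.
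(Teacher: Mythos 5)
Your overall strategy coincides with the paper's: Corollary \ref{cor4.3} is obtained from Proposition \ref{prop4.2} by trading the factor $P(b_{\tau\sigma^{n/2}})$ for $P(b_{\tau\sigma^m})$ via the isogeny $h$ of Proposition \ref{p2.4} and dividing the degrees, and your final arithmetic (using $s_0+s_1=2g+2$) is exactly the paper's. The paper does this globally, asserting that $\widetilde\phi_n=\phi_n\circ(\mathrm{id}\times h)$ and concluding $\deg\phi_n=\deg\widetilde\phi_n/\deg h$; you localize the comparison to the factor $\alpha_2$ versus $\alpha_2'$ in the factorization through $f_1^*JX_{\sigma^{n/2}}\times P(f_1)$, which is a legitimate reduction since $B=a^*_{\tau\sigma^m}(P(b_{\tau\sigma^m}))$ does lie in $P(f_1)$. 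You have also correctly put your finger on the point the paper glosses over: by diagram \eqref{prymdiag} one has $a^*_{\tau\sigma^m}\circ h=(1+\tau\sigma^m)\circ a^*_{\tau\sigma^{n/2}}$ rather than $a^*_{\tau\sigma^{n/2}}$, so the square with $\mathrm{id}\times h$ does not commute on the nose and a correction term must be accounted for.

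The gap is that you do not actually supply that correction except when $r=2$. Your proposed fix is a shear $(x,y)\mapsto\bigl(x+\eta(y),h(y)\bigr)$ with $\eta(y)=-\tau\sigma^m\bigl(a^*_{\tau\sigma^{n/2}}(y)\bigr)$, and for this to make sense you need $\tau\sigma^m(A)\subseteq a_\tau^*\bigl(P(b_\tau)\bigr)$, in particular $\tau(\tau\sigma^m z)=\tau\sigma^m z$ for $z\in A$. That identity is equivalent to $\tau\sigma^{2m}z=z$, which is the defining relation $\tau\sigma^{n/2}z=z$ of $A$ precisely when $n/2=2m$, i.e.\ $r=2$; for $r\geq 3$ the element $\tau\sigma^m z$ is in general neither $\tau$-invariant nor $\tau$-anti-invariant, so it need not lie in $a_\tau^*\bigl(P(b_\tau)\bigr)$ (nor in $A$ or $B$), and the discrepancy cannot be absorbed by modifying only the $P(b_\tau)$-coordinate. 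For $r\geq 3$ you say only that the argument is ``more delicate'' and gesture at Lemma \ref{ker} and a count of connected components; that is exactly where the content of the corollary sits, so as written the proof is incomplete. (To be fair, the paper's own proof rests on the bare assertion that its diagram commutes; but having raised the objection you must answer it, for instance by comparing $\ker\phi_n$ with $\ker\widetilde\phi_n$ and $\ker h$ directly rather than by exhibiting a commuting square.)
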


\begin{proof}
According to Proposition \ref{p2.4} the canonical map 
$$
h : P(b_{ \tau \sigma^{n/2}}) \ra  P(b_{\tau \sigma^m})
$$
is an isogeny of degree $2^{(m-1)(g-1) -2 + s_1}$.  

Now with the definition of the map $h$ one checks that the following diagram commutes
$$
\xymatrix@C=1.5cm{
  \left[ f^*JH \times JX_{K_\tau} \times JX_{K_{\tau \sigma^m}}  \times  P(b_\tau) \right]
\times P(b_{\tau \sigma^{n/2}})  \ar[r]^-{\widetilde \phi_n} 
\ar[d]_{id \times h} & JX \\
   \left[f^*JH \times JX_{K_\tau} \times JX_{K_{\tau \sigma^m}}  \times  P(b_\tau) \right]
\times P(b_{\tau \sigma^m})  \ar[ur]_(0.6){\phi_n} & \\
}
$$
So Propositions \ref{prop4.2} and \ref{p2.4} imply that $\phi_n$ is an isogeny of degree 
\begin{eqnarray*}
\deg \phi_n &=& \frac{\deg \widetilde \phi_n}{\deg h}\\
&=&  \frac{m^{2g-2} \cdot 2^{[(2^{r+1} -r)m +r](g-1) +2 -s_0}}{2^{(m-1)(g-1) -2 + s_1}} 
= m^{2g-2} \cdot 2^{[(2^{r+1} -r - 1)m +r-1](g-1)}
\end{eqnarray*} 
where we used again that $s_0 + s_1 = 2g+2$.
\end{proof}

\begin{cor} \label{cor4.4}
The canonical map
$$
\psi_n: JX_{K_\tau} \times JX_{K_{\tau \sigma^m}} \times P(b_\tau)  \times
P(b_{\tau \sigma^m})
\ra P(f)
$$
is an isogeny of degree $2^{[(2^{r+1} -r - 1)m - (r+1)](g-1)}$.
\end{cor}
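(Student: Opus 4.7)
The plan is to deduce the statement from Corollary \ref{cor4.3} by dividing out the $f^*JH$ factor, exploiting the standard decomposition $JX \sim f^*JH \times P(f)$ coming from Lemma \ref{lem1.5}.

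First I would verify that each of the four factors in the source of $\psi_n$ actually embeds into $P(f)$. By \cite[Proposition 2.4]{o} the subvarieties $JX_\tau$ and $JX_{\tau\sigma^m}$ lie in $P(f)$, and Corollary \ref{cor1.3} says that $b_\tau$ and $b_{\tau\sigma^m}$ are ramified, so their pullbacks are injective. Consequently $JX_{K_\tau}$ and $P(b_\tau)$ sit inside $JX_\tau \subset P(f)$, and likewise $JX_{K_{\tau\sigma^m}}, P(b_{\tau\sigma^m}) \subset JX_{\tau\sigma^m} \subset P(f)$. Thus $\psi_n$ is well defined with target $P(f)$, and the map $\phi_n$ of Corollary \ref{cor4.3} factors as
$$
\phi_n \;=\; \alpha \circ \bigl(\mathrm{id}_{f^*JH} \times \psi_n\bigr),
$$
where $\alpha: f^*JH \times P(f) \to JX$ is the addition map.

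Next I would compute $\deg \alpha$ by Lemma \ref{lem1.5}. Since $f$ is an étale cyclic cover of degree $n$, it is classified by an $n$-torsion line bundle on $H$, so $|\ker f^*| = n$, while $|JH[n]| = n^{2g}$. Hence
$$
\deg \alpha \;=\; \frac{n^{2g}}{n^{2}} \;=\; n^{2g-2} \;=\; m^{2g-2}\cdot 2^{2r(g-1)}.
$$

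Finally I would divide the formula of Corollary \ref{cor4.3} by this quantity: the $m^{2g-2}$ factor cancels and the exponent of $2$ drops by $2r(g-1)$, leaving
$$
\deg \psi_n \;=\; \frac{\deg \phi_n}{\deg \alpha} \;=\; 2^{[(2^{r+1}-r-1)m + (r-1) - 2r](g-1)} \;=\; 2^{[(2^{r+1}-r-1)m - (r+1)](g-1)},
$$
as claimed. That $\psi_n$ is genuinely an isogeny follows from the factorization together with a dimension count (the source has dimension $(n-1)(g-1) = \dim P(f)$, using $s_0 + s_1 = 2g+2$). The only step that is not pure bookkeeping is the identification $|\ker f^*| = n$, which is standard for étale cyclic covers; the rest is the factorization diagram and arithmetic.
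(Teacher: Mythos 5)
Your proposal is correct and follows essentially the same route as the paper: the authors also factor $\phi_n$ through the addition map $f^*JH \times P(f) \to JX$, compute its degree as $n^{2g-2}=(2^rm)^{2g-2}$ via Lemma \ref{lem1.5}, and divide the formula of Corollary \ref{cor4.3} by it. Your extra checks (that the four factors embed into $P(f)$ and that the dimensions match) are sound and merely make explicit what the paper leaves as ``clearly''; incidentally, the paper's statement that $\deg\varphi=(16m)^{2g-2}$ is a typo for $(2^rm)^{2g-2}$, which is the value actually used in its computation and the one you obtain.
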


\begin{proof}
Clearly the addition maps the source of $\psi_n$ into $P(f)$ and the following diagram is 
commutative
$$
\xymatrix@C=1.5cm{
  f^*JH \times \left[ JX_{K_\tau} \times JX_{K_{\tau \sigma^m}} \times P(b_\tau)  \times
P(b_{\tau \sigma^m}) \right] \ar[r]^-{ \phi_n} 
\ar[d]_{id \times \psi_n} & JX\\
 f^*JH  \times  P(f) \ar[ur]_{\varphi} & \\
}
$$
where $\varphi$ denotes the addition map. According to Lemma \ref{lem1.5}, $\varphi$ is an isogeny of degree $(16m)^{2g-2}$. 
Hence $\psi_n$ is an isogeny of degree 
$$
\deg \psi_n = \frac{\deg \phi_n}{\deg \varphi} = 
\frac{m^{2g-2} \cdot 2^{[(2^{r+1} -r - 1)m +r-1](g-1)}}{(2^rm)^{2g-2}} =
 2^{[(2^{r+1} -r - 1)m - (r+1)](g-1)}
$$
\end{proof}

\begin{proof}[Proof of Theorem 4.1]
The following diagram is commutative
$$
\xymatrix{
 JX_{K_\tau} \times JX_{K_{\tau \sigma^m}} \times P(b_\tau)  \times
P(b_{\tau \sigma^m})  \ar[dr]^(0.6){ \psi_n} 
\ar[d]_{\simeq} & \\
\left[ JX_{K_\tau} \times P(b_\tau)  \right] \times \left[JX_{K_{\tau \sigma^m}}
\times 
P(b_{\tau \sigma^m}) \right] 
\ar[d]_{\varphi_1 \times \varphi_2}  & P(f)\\
JX_\tau \times JX_{\tau \sigma^m} \ar[ur]_{a} & \\
}
$$
where $\varphi_1$ and $\varphi_2$ denote the addition maps. According to Lemma \ref{lem1.5}, 
$\varphi_1$ and $\varphi_2$ are isogenies of degrees
$2^{8m(g-1)  +2 - s_0}$ and $2^{8m(g-1) +2 - s_1}$ respectively. 
This implies that $a$ is an isogeny of degree
$$
\deg a = \frac{\deg \psi_n}{\deg \varphi_1 \cdot \deg \varphi_2} 
= \frac{2^{[(2^{r+1} -r - 1)m - (r+1)](g-1)}}{2^{(2^rm -2)(g-1)}} = 2^{[(2^r -r -1)m - (r-1)](g-1)}.
$$
which completes the proof of the theorem.
\end{proof}


\begin{thebibliography}{999999}


\bibitem{bl}
Ch. Birkenhake, H. Lange:
 \textit{Complex Abelian Varieties}. Second edition,
    Grundlehren der Math. Wiss. 302, Springer - Verlag, 2004.

\bibitem{crr}
   A. Carocca, S. Recillas, R. Rodriguez:
    {\it Dihedral groups acting on Jacobians}.
   Contemp. Math. {\bf 311} (2002), 41--77.
\bibitem{m}
 D. Mumford:
{\it Prym varieties I} .
 Contributions to Analysis,  L.V. Ahlfors, I.~Kra, B.~Maskit, and L.~Niremberg, editors, 
 Academic Press, 1974, 325--350.
\bibitem{o} 
    A. Ortega, 
{\it Vari\'et\'es de Prym associ\'ees aux rev\^etements n-cycliques d'un courbe  hyperlliptique}.
 Math. Z. {\bf 245} (2003), 97--103.
\bibitem{rr}
   S. Recillas, R. Rodriguez:
    {\it Prym varieties and fourfold covers II, the dihedral case}.
    Contemp. Math. {\bf 397} (2006), 177--191.
\bibitem{ri}
J. Ries:
{\it The Prym variety of a cyclic unramified cover of a hyperelliptic curve}. 
J. reine angew. Math. {\bf 340} (1983), 59--69.



\end{thebibliography}
\end{document}